\numberwithin{equation}{section}
\numberwithin{figure}{section}
\newtheorem{theorem}{Theorem}[section]
\newtheorem{lemma}[theorem]{Lemma}
\newtheorem{corollary}[theorem]{Corollary}
\newtheorem{remark}[theorem]{Remark}
\newtheorem{example}[theorem]{Example}
\theoremstyle{definition}
\newcommand{\C}{{\mathbb{C}}}
\newcommand{\Q}{{\mathbb{Q}}}
\newcommand{\into}{\hookrightarrow}
\DeclareMathOperator{\pt}{pt}
\newcommand{\hsm}{{\hspace{1mm}}}
\definecolor{gold}{rgb}{0.85,.66,0}
\definecolor{cherry}{rgb}{0.9,.1,.2}
\definecolor{burgundy}{rgb}{0.8,.2,.2}
\definecolor{orangered}{rgb}{0.85,.3,0}
\definecolor{orange}{rgb}{0.85,.4,0}
\definecolor{olive}{rgb}{.45,.4,0}
\definecolor{lime}{rgb}{.6,.9,0}
\definecolor{green}{rgb}{.2,.7,0}
\definecolor{grey}{rgb}{.4,.4,.2}
\definecolor{brown}{rgb}{.4,.3,.1}
\newcommand\toe[2]{\mathcal{T}_{#1}(#2)}
\newcommand\head[2]{\mathcal{H}_{#1}(#2)}
\newcommand{\Flags}{{\mathcal{F}\ell ags}}
\begin{document}

\title{A Giambelli formula for the $S^1$-equivariant cohomology of type $A$ Peterson varieties}

\author{Darius Bayegan}
\address{Department of Pure Mathematics and Mathematical 
Statistics\\ Centre for Mathematical Sciences \\ Wilberforce Road \\
Cambridge CB3 0WA \\ United Kingdom}

\author{Megumi Harada}
\address{Department of Mathematics and
Statistics\\ McMaster University\\ 1280 Main Street West\\ Hamilton, Ontario L8S4K1\\ Canada}
 \email{Megumi.Harada@math.mcmaster.ca}
% \urladdr{\url{http://www.math.mcmaster.ca/Megumi.Harada/}}
\thanks{The second author is partially supported by an NSERC Discovery Grant,
an NSERC University Faculty Award, and an Ontario Ministry of Research
and Innovation Early Researcher Award.}

\keywords{} 
\subjclass[2000]{Primary: 14N15; Secondary: 55N91}

\date{\today}

%%%%%%%%%%%%%%%%%%%%
% Disclaimer
%%%%%%%%%%%%%%%%%%%%
% \begin{center}
% \framebox{
% {\Large\bf DRAFT (\today): DO NOT DISTRIBUTE.}}
% \end{center}

%%%%%%%%%%%%%%%%%%%%%
%  Abstract
%%%%%%%%%%%%%%%%%%%%%

\begin{abstract}

  The main result of this note is a \textbf{Giambelli formula} for
  the Peterson Schubert classes in the $S^1$-equivariant cohomology ring
  of a type $A$ Peterson variety. Our results depend on the Monk
  formula for the equivariant structure constants for the Peterson Schubert
  classes derived by Harada and Tymoczko. In addition, we give
  proofs of two facts observed by H. Naruse: firstly, that some
  constants which appear in the multiplicative structure of the
  $S^1$-equivariant cohomology of Peterson varieties 
  are \textbf{Stirling numbers of the second kind}, and secondly, that the
  Peterson Schubert classes satisfy a \textbf{stability property} in a sense analogous to the
  stability of the classical equivariant Schubert classes in the
  $T$-equivariant cohomology of the flag variety. 
\end{abstract}

\maketitle

\setcounter{tocdepth}{1}
\tableofcontents

\section{Introduction}\label{sec:intro}

The main result of this note is a \textbf{Giambelli formula} in the
$S^1$-equivariant cohomology\footnote{In this note, all cohomology rings are with
  coefficients in $\C$.} of type $A$ Peterson
varieties. Specifically, we give an explicit formula which expresses
an arbitrary \textbf{Peterson Schubert class} in terms of the degree-$2$
Peterson Schubert classes. We call this a ``Giambelli formula'' by
analogy with the standard Giambelli formula in classical Schubert
calculus \cite{Ful97} which expresses an arbitrary Schubert class in terms
of degree-$2$ Schubert classes. 

We briefly recall the setting of our results. \textbf{Peterson
 varieties} in type $A$ can be 
defined as the following subvariety $Y$ of
$\mathcal{F}\ell ags(\C^n)$: 
\begin{equation}\label{eq:def intro}
Y := \{ V_\bullet = (0 \subseteq V_1 \subseteq V_2 \subseteq \cdots
\subseteq V_{n-1} \subseteq V_n = \C^n) \hsm \mid \hsm  NV_i \subseteq
V_{i+1} \textup{ for all } i = 1, \ldots, n-1\} 
\end{equation}
where $N: \C^n \to \C^n$ denotes the principal nilpotent operator.
These varieties have been much studied due to its relation to 
the quantum cohomology of the flag variety \cite{Kos96,
  Rie03}. Thus it is natural to study their topology, e.g. the
structure of their (equivariant) cohomology rings. We do so through
Schubert calculus techniques. Our results extend techniques initiated
and developed in \cite{HarTym09, HarTym10}, to which we refer the
reader for further details and motivation.

There is a natural circle subgroup of $U(n,\C)$ which acts
on $Y$ (recalled in Section~\ref{sec:background}).
The inclusion of $Y$
into $\Flags(\C^n)$ induces a natural ring homomorphism
\begin{equation}\label{eq:intro-proj}
H^*_T(\mathcal{F}\ell ags(\C^n)) \to H^*_{S^1}(Y)
\end{equation} 
where $T$ is the subgroup of diagonal matrices of $U(n,\C)$ acting in the
usual way on $\Flags(\C^n)$. One of the main results of \cite{HarTym09}
is that a certain subset of the equivariant Schubert classes
$\{\sigma_w\}_{w \in S_n}$ in $H^*_T(\Flags(\C^n))$ maps under the projection~\eqref{eq:intro-proj} to a
computationally convenient module basis of $H^*_{S^1}(Y)$. We refer to
the images via~\eqref{eq:intro-proj} of $\{\sigma_w\}_{w \in S_n}$ in
$H^*_{S^1}(Y)$ as \textbf{Peterson Schubert classes}. Moreover,
\cite[Theorem 6.12]{HarTym09} gives a manifestly positive \textbf{Monk
  formula} for the product of a degree-$2$ Peterson Schubert class
with an arbitrary Peterson Schubert class, expressed as a
$H^*_{S^1}(\pt)$-linear combination of Peterson Schubert
classes. This is an example of equivariant Schubert calculus in the
realm of Hessenberg varieties (of which Peterson varieties are a
special case), and we view the Giambelli formula (Theorem~\ref{theorem:giambelli}) as a
further development of this theory. The 
Giambelli formula for Peterson varieties was also independently
observed by H. Naruse.

Our Giambelli formula also allows us to simplify the presentation of
the ring $H^*_{S^1}(Y)$ given in \cite[Section 6]{HarTym09}. This is
because the previous presentation used as its generators all of the
elements in the module basis given by Peterson Schubert classes,
although the ring $H^*_{S^1}(Y)$ is multiplicatively generated by only
the degree-$2$ Peterson Schubert classes.  Details are explained in
Section~\ref{subsec:simplify} below, where we also give a concrete
example in $n=4$ to illustrate our results. We also formulate a
conjecture (cf. Remark~\ref{remark:quadratic}), suggested to us by 
the referee of this manuscript, that the ideal of defining relations is in fact
generated by the quadratic relations only. If true, this would be a
significant further simplification of the presentation of this ring and would
lead to interesting further questions (both combinatorial and
geometric).

In Sections~\ref{sec:stirling} and~\ref{sec:stability}, we present
proofs of two facts concerning Peterson Schubert classes which we
learned from H. Naruse. The results are due to Naruse but the proofs
given here are our own. We chose to include these results because
they do not appear elsewhere in the literature.  The first fact is
that \textbf{Stirling numbers of the second kind} (see
Section~\ref{sec:stirling} for the definition) appear naturally in the
product structure of $H^*_{S^1}(Y)$. The second is that the Peterson
Schubert classes satisfy a \textbf{stability condition} with respect
to the natural inclusions of Peterson varieties induced from the
inclusions $\Flags(\C^n) \into \Flags(\C^{n+1})$.

\medskip

\noindent \textbf{Acknowledgements.} We are grateful to Hiroshi Naruse
for communicating to us his observations on the stability of Peterson
Schubert classes and the presence of Stirling numbers of the second
kind. We thank Julianna Tymoczko for support of and interest in this
project and Alex Yong for useful conversations. We also 
thank an anonymous referee for a careful reading of the manuscript and
many helpful suggestions; in particular, the referee suggested the
conjecture regarding quadratic relations recorded in
Remark~\ref{remark:quadratic}, as well as the question which we record
in Remark~\ref{remark:normal crossings}.
% Finally,
% we thank the support of an NSERC Undergraduate Research Student Award
% in the summer of 2009, which allowed the first author to engage in
% this undergraduate student research project under the supervision of
% the second author. 

\section{Peterson varieties and $S^1$-fixed
  points}\label{sec:background}

In this section we briefly recall the objects under study. For details
we refer the reader to \cite{HarTym09}. Since we work exclusively in
Lie type $A$ we henceforth omit it from our terminology. 

By the \textbf{flag variety} we mean the homogeneous space
$GL(n,\C)/B$ where $B$ is the standard Borel subgroup of upper-triangular
invertible matrices. The flag variety can also be identified with the
space of nested subspaces in $\C^n$, i.e., 
\[
\mathcal{F}\ell ags(\C^n) := 
\{ V_{\bullet} = (\{0\} \subseteq V_1 \subseteq V_2 \subseteq \cdots \subseteq V_{n-1} \subseteq
V_n = \C^n) \hsm \mid \hsm \dim_{\C}(V_i) = i \} \cong GL(n,\C)/B. 
\]
% A Hessenberg function is a function
% $h: \{1,2,\ldots, n\} \to \{1,2,\ldots,
% n\}$ satisfying $h(i) \geq i$ for all $1 \leq i \leq n$ and $h(i+1)
% \geq h(i)$ for all $1 \leq i < n$. 
% In this note we focus exclusively on the \textbf{Peterson
%   Hessenberg function} defined by 
% \begin{equation}\label{eq:definition Peterson Hessenberg}
% h(i)=i+1 \, \textup{ for } 1\leq i \leq n-1 \quad \textup{and} \quad
% h(n)=n.
% \end{equation}
Let $N$ be the $n \times n$ principal nilpotent operator
given with respect to the standard basis of $\C^n$ as the matrix
with one $n \times n$ Jordan block of eigenvalue $0$, i.e., 
\begin{equation}\label{eq:standard principal nilpotent}
N = \begin{bmatrix} 0 & 1 & 0 &         & & \\
                    0 & 0 & 1 &         & & \\
                    0 & 0 & 0 &         & & \\
                      &   &   & \ddots  & & \\
                      &   &   &         & 0 & 1\\
                      &   &   &         & 0 & 0 \\
    \end{bmatrix}.
\end{equation}
Fix $n$ a positive integer. The main geometric object under study, the
\textbf{Peterson variety} $Y$, is the subvariety of $\mathcal{F}\ell
ags(\C^n)$ defined in~\eqref{eq:def intro} 
% We may now define the main geometric object under study. 
% \begin{definition}\label{definition:peterson}
% Fix $n$ a positive integer. The \textbf{Peterson variety} 
% $Y$ is defined as the following subvariety of 
% $\mathcal{F}\ell ags(\C^n)$: 
% \begin{equation}\label{eq:def-Hess}
% Y  := \{ V_{\bullet}  \in \mathcal{F}\ell ags(\C^n) \;
% \vert \;  N V_i \subseteq
% V_{i+1} \text{ for all } i=1,\ldots,n-1\} \subseteq \mathcal{F}\ell
% ags(\C^n) 
% \end{equation}
where $N$ is the standard principal nilpotent in~\eqref{eq:standard
  principal nilpotent}. The variety $Y$ is a (singular) projective
variety of complex dimension $n-1$.

We recall some facts from \cite{HarTym09}. 
The following circle subgroup of $U(n,\C)$ preserves $Y$: 
\begin{equation}\label{eq:def-circle}
S^1  =   \left\{ \left. \begin{bmatrix} t^n & 0 & \cdots & 0 \\ 0 & t^{n-1} &  &
      0 \\ 0 & 0 & \ddots & 0 \\ 0 & 0 &  & t \end{bmatrix}
  \; \right\rvert \;  t \in \C, \; \|t\| = 1 \right\}  \subseteq T^n
\subseteq U(n,\C). 
\end{equation}
Here $T^n$ is the standard maximal torus of $U(n,\C)$ consisting of diagonal unitary matrices.
The $S^1$-fixed points of $Y$ are isolated and are
a subset of the $T^n$-fixed points of $\Flags(\C^n)$.
As is standard, we identify the $T^n$-fixed points in
$\Flags(\C^n)$ with the permutations $S_n$.
In particular since
$Y^{S^1}$ is a subset of $\Flags(\C^n)^{T^n}$, we think of the
Peterson fixed points as permutations in $S_n$. 
There is a natural bijective correspondence between the Peterson fixed
points $Y^{S^1}$ and subsets of $\{1,2,\ldots,n-1\}$ 
which we now
briefly recall. It is explained in
\cite[Section 2.3]{HarTym09} that a permutation $w \in S_n$ is in
$Y^{S^1}$ precisely when the one-line notation of $w^{-1}$ is of the
form 
\begin{equation}\label{eq:w-oneline}
w^{-1} = \underbrace{j_1 \,  j_1 -1 \, \cdots \, 1}_{j_1 \textup{ entries}} \,  \underbrace{j_2 \,  j_2-1 \, \cdots \,
j_1+1}_{j_2-j_1 \textup{ entries}} \, \cdots \,
\underbrace{n \, n-1 \, \cdots \, j_m+1}_{n-j_m \textup{ entries}}
\end{equation}
where \(1 \leq j_1 < j_2 < \cdots < j_m < n\) is any sequence of
strictly increasing integers. For example, for $n=9, m=2$ and $j_1 = 3, j_2 = 7$, then the permutation $w^{-1}$ in~\eqref{eq:w-oneline} has one-line notation $321765498$. Thus for each permutation \(w \in S_n\)
satisfying~\eqref{eq:w-oneline} we define 
\[
{\mathcal{A}} := \{ i: w^{-1}(i) = w^{-1}(i+1)+1 \textup{ for } 1 \leq i\leq n-1\} \subseteq
\{1,2,\ldots, n-1\}.
\]
This gives a one-to-one correspondence between the power set of
$\{1,2,\ldots, n-1\}$ and $Y^{S^1}$. We denote the Peterson fixed point
corresponding to a subset $\mathcal{A} \subseteq \{1,2,\ldots, n-1\}$
by $w_{\mathcal{A}}$.

\begin{example}
Let $n=5$ and suppose $\mathcal{A}=\{1,2,4\}$. Then the associated
permutation is 
$w_{\mathcal{A}} = 32154$.
\end{example}

Indeed, for a fixed $n$, we can also easily enumerate all the Peterson fixed points by using
this correspondence. 

\begin{example}\label{example:Peterson Variety}
Let $n=4$. Then $Y^{S^1}$ consists of $2^3 = 8$ elements in
correspondence with the subsets of $\{1,2,3\}$, namely: 
$w_{\emptyset} = 1234, w_{\{1\}} = 2134, w_{\{2\}} = 1324, w_{\{3\}} =
1243, w_{\{1,2\}} = 3214, w_{\{2,3\}} = 1432, w_{\{1,3\}} = 2143,
w_{\{1,2,3\}} = 4321$. 
\end{example}

Given a choice of subset $\mathcal{A} \subseteq \{1,2,\ldots,n-1\}$,
there is a natural decomposition of $\mathcal{A}$ as follows. 
We say that a set of consecutive integers
\[\{a, a+1, \ldots, a+k\} \subseteq {\mathcal{A}}\] is a {\bf maximal
  consecutive (sub)string} of ${\mathcal{A}}$ if $a$ and $k$ are such
that neither $a-1$ nor $a+k+1$ is in ${\mathcal{A}}$.  For $a_1 := a$
and $a_2 := a_1+k$, we denote the corresponding maximal consecutive
substring by $[a_1,a_2]$.  It is straightforward to see that any
${\mathcal{A}}$ uniquely decomposes into a disjoint union of maximal
consecutive substrings
\[ {\mathcal{A}} = [a_1, a_2] \cup [a_3,a_4] \cup \cdots \cup [a_{m-1},a_m].\] 
For instance, if $\mathcal{A} = \{1,2,3, 5,6,8\}$, then its
decomposition into maximal
consecutive substrings is $\{1,2,3\} \cup \{5,6\} \cup \{8\} = [1,3]
\cup [5,6] \cup [8,8]$.

Suppose $\mathcal{A} = \{j_1 < j_2 < \cdots < j_m\}$. Finally we
recall that we can associate to each $w_{\mathcal{A}}$ a 
permutation $v_{\mathcal{A}}$ by the recipe
\begin{equation}\label{eq:def vA}
w_{\mathcal{A}} \mapsto v_{\mathcal{A}} := s_{j_1} s_{j_2} \cdots s_{j_m}
\end{equation}
where an $s_i$ denotes the simple transposition $(i,i+1)$ in $S_n$.

\section{The Giambelli formula for Peterson varieties}

\subsection{The Giambelli formula}

In this section we prove the main result of this note, namely, a
\textbf{Giambelli formula for Peterson varieties}.

As recalled above, the Peterson variety $Y$ is an 
$S^1$-space for a subtorus $S^1$ of $T^n$ and it can be checked that 
\(Y^{S^1} = (\Flags(\C^n))^{T^n} \cap Y.\) There is a
forgetful map from $T^n$-equivariant cohomology to
$S^1$-equivariant cohomology obtained by the inclusion 
\(S^1 \into T\), so there is a commutative diagram 
\begin{equation}\label{eq:comm-diag}
\xymatrix{
H^*_{T^n}(\Flags(\C^n)) \ar[r] \ar[d] & H^*_{T^n}((\Flags(\C^n))^{T^n}) \ar[d] \\
H^*_{S^1}(\Flags(\C^n)) \ar[r] \ar[d] & H^*_{S^1}((\Flags(\C^n))^{T^n}) \ar[d] \\
H^*_{S^1}(Y) \ar[r] & H^*_{S^1}(Y^{S^1}). 
}
\end{equation}
The \textbf{equivariant Schubert classes} $\{\sigma_w\}$ in
$H^*_{T^n}(\Flags(\C^n))$ are well-known to form a~$H^*_{T^n}(\pt)$-module~basis for $H^*_{T^n}(\Flags(\C^n))$. 
We call the image of $\sigma_w$ under the projection map $H^*_{T^n}(\Flags(\C^n))
\to H^*_{S^1}(Y)$ the \textbf{Peterson
    Schubert class corresponding to $w$}. For the permutations
  $v_{\mathcal{A}}$ defined in~\eqref{eq:def vA}, we denote by
  $p_{\mathcal{A}}$ the
  corresponding Peterson Schubert class, i.e. the image of
  $\sigma_{v_{\mathcal{A}}}$. (This is slightly different notation
  from that used in \cite{HarTym09}.) 
We denote by
$p_{\mathcal{A}}(w) \in H^*_{S^1}(\pt) \cong \C[t]$ the restriction of the
Peterson Schubert class $p_{\mathcal{A}}$ to the fixed point $w \in Y^{S^1}$.

One of the main results of \cite{HarTym09} is that the set of
$2^{n-1}$ Peterson
Schubert classes \(\{p_{\mathcal{A}}\}_{\mathcal{A} \subseteq
  \{1,2,\ldots, n-1\}}\) form a $H^*_{S^1}(\pt)$-module basis for
$H^*_{S^1}(Y)$ where $v_{\mathcal{A}}$ is defined in~\eqref{eq:def
  vA}. (The fact that $H^*_{S^1}(Y)$ is a free module of rank $2^{n-1}$
over $H^*_{S^1}(\pt)$ fits nicely with the result \cite[Theorem
10.2]{SommersTymoczko} that the Poincar\'e polynomial of $Y$ is given
by $(q^2+1)^{n-1}$.) It is also shown in \cite{HarTym09} that the $n-1$
degree-$2$ classes
$\{p_i := p_{s_i}\}_{i=1}^{n-1}$ form a multiplicative set of
generators for $H^*_{S^1}(Y)$. These classes $p_i$ are also
(equivariant) Chern classes of certain line bundles over $Y$. Moreover, there is a \textbf{Monk formula} \cite[Theorem 6.12]{HarTym09}  
which expresses a product 
\[
p_i p_{\mathcal{A}}
\]
for any $i \in \{1,2,\ldots,n-1\}$ and any $\mathcal{A} \subseteq
\{1,2,\ldots,n-1\}$ as a $H^*_{S^1}(\pt)$-linear combination
of the additive module basis
$\{p_{\mathcal{A}}\}$. Since the
$p_i$ multiplicatively generate the ring, this Monk formula completely
determines the ring structure of $H^*_{S^1}(Y)$.  
Furthermore it is in principle possible to express any
$p_{\mathcal{A}}$ in terms of the $p_i$. Our Giambelli formula is an
explicit formula which achieves this (cf. for example \cite{Ful97} for
the version in classical Schubert calculus).

We begin by recalling the Monk formula, for which we need some terminology. 
Fix ${\mathcal{A}} \subseteq \{1,2,\ldots,n-1\}$.  We
define 
$\mathcal{H}_{\mathcal{A}}: \mathcal{A} 
\to \mathcal{A}$ by 
\[\head{{\mathcal{A}}}{j} = \textup{the maximal element  in the maximal consecutive substring of ${\mathcal{A}}$ containing $j$}.\]
Similarly, we define 
$\mathcal{T}_{\mathcal{A}}: \mathcal{A}
\rightarrow \mathcal{A}$ by 
\[\toe{{\mathcal{A}}}{j} = 
\textup{the minimal element in the maximal consecutive substring of
  ${\mathcal{A}}$ containing } j.\] We say that the maps
$\mathcal{H}_{\mathcal{A}}$ and $\mathcal{T}_{\mathcal{A}}$ give the
``head" and ``tail" of each maximal consecutive substring of
$\mathcal{A}$. For an example see \cite[Example 5.6]{HarTym09}.
We recall the following. 

\begin{theorem}\label{theorem:Monk} \textbf{``The Monk formula for Peterson varieties'' (\cite[Theorem 6.12]{HarTym09})}
Fix $n$ a positive integer. Let $Y$ be the Peterson variety in $\Flags(\C^n)$ with the 
natural $S^1$-action defined by~\eqref{eq:def-circle}. For ${\mathcal A}
\subseteq \{1,2,\ldots, n-1\}$, let $v_{\mathcal{A}} \in S_n$ be the permutation
in~\eqref{eq:def vA}, and let $p_{\mathcal{A}}$ be the
corresponding Peterson Schubert class in $H^*_{S^1}(Y)$. Then 
\begin{equation}\label{eq:Monk-final}
p_i \cdot p_{\mathcal{A}} = p_i(w_{\mathcal{A}}) \cdot 
p_{\mathcal{A}} +  \sum_{{\mathcal{A}} \subsetneq {\mathcal{B}} \textup{ and } |{\mathcal{B}}|
  = |{\mathcal{A}}|+1} c^{\mathcal{B}}_{i,{\mathcal{A}}} \cdot p_{\mathcal{B}},
\end{equation}
where, for a subset $\mathcal{B} \subseteq \{1,2,\ldots,n-1\}$ which is a disjoint union
\(\mathcal{B} =
  \mathcal{A} \cup \{k\},\) 
\begin{itemize} 
\item if \(i \not \in \mathcal{B}\) then
  $c^{\mathcal{B}}_{i,\mathcal{A}} = 0$, 
\item if \(i \in {\mathcal{B}}\) and \(i \not \in [\toe{\mathcal{B}}{k},
\head{\mathcal{B}}{k}],\) then $c^{\mathcal{B}}_{i,{\mathcal{A}}} =
0$, 
\item if \(k \leq i \leq \head{\mathcal{B}}{k},\) 
  then 
\begin{equation}\label{eq:cBiA-formula-part1}
c^{\mathcal{B}}_{i,{\mathcal{A}}} = (\head{{\mathcal{B}}}{k}-i+1) \cdot \left( \begin{array}{c} \head{{\mathcal{B}}}{k} - \toe{{\mathcal{B}}}{k}+1 \\
    k-\toe{{\mathcal{B}}}{k} \end{array} \right),
\end{equation}
\item if \(\toe{{\mathcal{B}}}{k} \leq i \leq k-1,\) 
\begin{equation}\label{eq:cBiA-formula-part2}
c^{\mathcal{B}}_{i,{\mathcal{A}}} 
 = (i-\toe{{\mathcal{B}}}{k}+1) \cdot \binom{\head{{\mathcal{B}}}{k}-\toe{{\mathcal{B}}}{k}+1}{k-\toe{{\mathcal{B}}}{k}+1}.
\end{equation}
\end{itemize} 
\end{theorem}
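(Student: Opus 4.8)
Since this statement is recalled from \cite[Theorem 6.12]{HarTym09}, I only outline the approach I would take to establish it from scratch. The engine is localization at the $S^1$-fixed points. As recorded in the commutative diagram~\eqref{eq:comm-diag}, there is a restriction map $H^*_{S^1}(Y) \to H^*_{S^1}(Y^{S^1}) \cong \bigoplus_{\mathcal{B}} \C[t]$, and this map is injective (this injectivity is part of the GKM-type description of $H^*_{S^1}(Y)$ underlying the module-basis result recalled above). Hence any class in $H^*_{S^1}(Y)$—in particular the product $p_i \cdot p_{\mathcal{A}}$—is completely determined by the tuple of its restrictions $\{(p_i \cdot p_{\mathcal{A}})(w_{\mathcal{B}})\}_{\mathcal{B}}$, and since restriction to fixed points is a ring map, each such value is simply the product $p_i(w_{\mathcal{B}}) \cdot p_{\mathcal{A}}(w_{\mathcal{B}})$ in $\C[t]$.

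The plan is then to expand $p_i \cdot p_{\mathcal{A}} = \sum_{\mathcal{B}} c^{\mathcal{B}}_{i,\mathcal{A}}\, p_{\mathcal{B}}$ in the module basis $\{p_{\mathcal{B}}\}$ and to solve for the coefficients $c^{\mathcal{B}}_{i,\mathcal{A}} \in \C[t]$ via a triangular linear system. Two preliminary facts drive this. First, a degree count: $p_i \cdot p_{\mathcal{A}}$ has cohomological degree $2(|\mathcal{A}|+1)$, so $\deg c^{\mathcal{B}}_{i,\mathcal{A}} = 2(|\mathcal{A}|+1-|\mathcal{B}|)$, which forces $|\mathcal{B}| \leq |\mathcal{A}|+1$, shows that the coefficients with $|\mathcal{B}| = |\mathcal{A}|+1$ are constants, and shows that the coefficient of $p_{\mathcal{A}}$ is a degree-$2$ multiple of $t$. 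Second, a support (triangularity) property: $p_{\mathcal{B}}(w_{\mathcal{C}}) = 0$ unless $\mathcal{B} \subseteq \mathcal{C}$, which follows from the Bruhat-order support of the ambient Schubert classes $\sigma_{v_{\mathcal{B}}}$ together with the combinatorics relating $\mathcal{B}$, $v_{\mathcal{B}}$ and $w_{\mathcal{C}}$ from Section~\ref{sec:background}. Evaluating the expansion at $w_{\mathcal{C}}$ for $\mathcal{C} \not\supseteq \mathcal{A}$ and solving from the smallest $\mathcal{C}$ upward then collapses the expansion to $\mathcal{B} = \mathcal{A}$ together with the one-step-larger sets $\mathcal{B} = \mathcal{A} \cup \{k\}$, matching the shape of~\eqref{eq:Monk-final}.

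The computational core is to evaluate the localizations $p_{\mathcal{A}}(w_{\mathcal{B}})$ explicitly. I would obtain these from the AJS--Billey formula for the restriction of a Schubert class $\sigma_{v_{\mathcal{A}}}$ to a $T^n$-fixed point, expressed as a sum over subwords of a reduced word, and then push these values along the forgetful map $H^*_{T^n}(\pt) \to H^*_{S^1}(\pt)$ induced by $S^1 \into T^n$, under which the simple roots specialize to explicit integer multiples of $t$ dictated by the weights in~\eqref{eq:def-circle}. Feeding these values into the triangular system and solving by induction on $|\mathcal{B}|$ yields the structure constants: the diagonal coefficient is read off immediately as $p_i(w_{\mathcal{A}})$, while each off-diagonal coefficient $c^{\mathcal{A} \cup \{k\}}_{i,\mathcal{A}}$ emerges from a one-step difference of restriction values.

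The main obstacle is the last step: showing that the coefficients extracted from this localization computation coincide with the closed-form binomial expressions~\eqref{eq:cBiA-formula-part1} and~\eqref{eq:cBiA-formula-part2}, together with the stated vanishing in the first two bullet points. I expect this to reduce to a family of binomial identities organized by the position of $i$ relative to the maximal consecutive substring $[\toe{\mathcal{B}}{k}, \head{\mathcal{B}}{k}]$ of $\mathcal{B}$ containing the new element $k$—precisely the case division ($k \leq i \leq \head{\mathcal{B}}{k}$ versus $\toe{\mathcal{B}}{k} \leq i \leq k-1$, versus $i$ outside the substring) appearing in the statement. Verifying these identities, and confirming that the substring structure isolates the contributions as claimed, is where the real work lies.
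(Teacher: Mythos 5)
The paper does not prove this statement: Theorem~\ref{theorem:Monk} is quoted verbatim from \cite[Theorem 6.12]{HarTym09} and used as a black box, so there is no internal proof to compare against. Your outline does track the strategy of the proof in the cited source: injectivity of restriction to the $S^1$-fixed points, expansion of $p_i \cdot p_{\mathcal{A}}$ in the module basis $\{p_{\mathcal{B}}\}$, the degree count forcing $|\mathcal{B}| \leq |\mathcal{A}|+1$, the support property $p_{\mathcal{B}}(w_{\mathcal{C}}) = 0$ unless $\mathcal{B} \subseteq \mathcal{C}$ (this is exactly the vanishing the present paper invokes as \cite[Lemma 6.4]{HarTym09}), and the identification of the diagonal coefficient as $p_i(w_{\mathcal{A}})$ by evaluating at $w_{\mathcal{A}}$. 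All of that is sound and is the right skeleton.

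However, as a proof the proposal has a genuine gap, which you yourself flag: everything that distinguishes this theorem from a generic triangularity statement lives in the explicit values of the restrictions $p_{\mathcal{A}}(w_{\mathcal{B}})$ and in the extraction of the closed-form constants \eqref{eq:cBiA-formula-part1} and \eqref{eq:cBiA-formula-part2} from them. You defer precisely this step (``where the real work lies''). Concretely, what is missing is: (i) the specialization of the AJS--Billey sum under $S^1 \into T^n$, where one must show that every simple root restricts to the \emph{same} class $t$, so that the subword sum becomes a weighted count of reduced subwords of a reduced word for $w_{\mathcal{B}}$ containing $v_{\mathcal{A}}$; (ii) the combinatorial evaluation of that count, which is where the binomial coefficients and the case split on the position of $i$ within $[\toe{\mathcal{B}}{k}, \head{\mathcal{B}}{k}]$ actually arise; and (iii) the verification that the coefficient vanishes when $i \in \mathcal{B}$ but $i$ lies outside the maximal consecutive substring containing $k$, which does not follow from the degree or support arguments alone. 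Without carrying out (i)--(iii) the statement of the four bullet points is not established, only made plausible. Since the theorem is imported rather than proved here, the appropriate resolution is simply to cite \cite[Theorem 6.12]{HarTym09}, as the paper does; if you wanted a self-contained proof, the localization computation would have to be done in full.
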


We also recall that \cite[Lemma 6.7]{HarTym09} implies that 
if $\mathcal{B}, \mathcal{B}'$ are two disjoint subsets of
$\{1,2,\ldots,n-1\}$ such that 
there is no $i$ in $\mathcal{B}$ and $j$ in $\mathcal{B}'$ 
with $\lvert {i-j} \rvert=1$, then $p_{\mathcal{B}\cup\mathcal{B}'} =p_{\mathcal{B}}p_{\mathcal{B}'}$. 
It follows that 
for any $\mathcal{A}$ we have 
\begin{equation}\label{eq:product pvA}
p_{\mathcal{A}} = p_{[a_1, a_2]} \cdot p_{[a_3,a_4]} \cdots p_{[a_{m-1},a_m]}
\end{equation}
where \({\mathcal{A}} = [a_1, a_2] \cup [a_3,a_4] \cup \cdots \cup
[a_{m-1},a_m]\) is the decomposition of $\mathcal{A}$ into maximal
consecutive substrings. In particular, in order to give an expression
for $p_{\mathcal{A}}$ in terms of the elements $p_i$,
from~\eqref{eq:product pvA} we see that it suffices
to give a formula only for the special case in which $\mathcal{A}$
consists of a single maximal consecutive string.

We now state and prove our Giambelli formula.

\begin{theorem}\label{theorem:giambelli} 
Fix $n$ a positive integer. Let $Y$ be the Peterson variety in $\mathcal{F}\ell ags(\C^n)$
with the $S^1$-action defined by~\eqref{eq:def-circle}. Suppose
$\mathcal{A}=\{a,a+1,a+2, \ldots, a+k\}$ where $1 \leq a \leq n-1$ and
$0 \leq k \leq n-1-a$. Let $v_{\mathcal{A}}$ be the permutation
corresponding to $\mathcal{A}$ 
defined in~\eqref{eq:def vA} and let $p_{\mathcal{A}}$ be the
associated Peterson
Schubert class. Then
\[
p_{\mathcal{A}} =\frac{1}{(k+1)!} \displaystyle\prod_{j \in
  \mathcal{A}} p_{j}.
\]
\end{theorem}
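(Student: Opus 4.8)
The plan is to induct on the length $k+1$ of the single maximal consecutive string $\mathcal{A}=\{a,a+1,\ldots,a+k\}$. As noted in the text, Theorem~\ref{theorem:Monk} together with~\eqref{eq:product pvA} already reduces the general case to that of a single maximal consecutive string, so it suffices to treat such $\mathcal{A}$, exactly as in the statement. The base case $k=0$ is immediate: then $\mathcal{A}=\{a\}$, $v_{\mathcal{A}}=s_a$, and by definition $p_{\mathcal{A}}=p_a=\tfrac{1}{1!}\prod_{j\in\mathcal{A}}p_j$.

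For the inductive step I would set $\mathcal{A}':=\{a,a+1,\ldots,a+k-1\}$, a consecutive string of length $k$, and assume $p_{\mathcal{A}'}=\tfrac{1}{k!}\prod_{j=a}^{a+k-1}p_j$. Since $\prod_{j=a}^{a+k}p_j=p_{a+k}\cdot\prod_{j=a}^{a+k-1}p_j=k!\,p_{a+k}\,p_{\mathcal{A}'}$, it suffices to prove the single identity
\[
p_{a+k}\cdot p_{\mathcal{A}'}=(k+1)\,p_{\mathcal{A}}.
\]
I would obtain this by expanding the left-hand side with the Monk formula~\eqref{eq:Monk-final}, taking $i=a+k$ and the subset $\mathcal{A}'$. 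Every subset $\mathcal{B}$ occurring in the sum has the form $\mathcal{B}=\mathcal{A}'\cup\{l\}$ with $|\mathcal{B}|=k+1$; since the first bullet point of Theorem~\ref{theorem:Monk} forces $c^{\mathcal{B}}_{a+k,\mathcal{A}'}=0$ unless $a+k\in\mathcal{B}$, and $a+k\notin\mathcal{A}'$, the only surviving term is $\mathcal{B}=\mathcal{A}'\cup\{a+k\}=\mathcal{A}$.

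It then remains to evaluate the coefficient and to dispose of the diagonal term. For $\mathcal{B}=\mathcal{A}=\{a,\ldots,a+k\}$ the newly added element is $a+k$ itself, so $\toe{\mathcal{B}}{a+k}=a$ and $\head{\mathcal{B}}{a+k}=a+k$; with $i=a+k$ we lie in the range $a+k\le i\le\head{\mathcal{B}}{a+k}$ of~\eqref{eq:cBiA-formula-part1}, which gives
\[
c^{\mathcal{A}}_{a+k,\mathcal{A}'}=(\head{\mathcal{B}}{a+k}-i+1)\binom{\head{\mathcal{B}}{a+k}-\toe{\mathcal{B}}{a+k}+1}{(a+k)-\toe{\mathcal{B}}{a+k}}=1\cdot\binom{k+1}{k}=k+1,
\]
exactly the coefficient we want. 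The one genuine point, and the step I expect to be the main obstacle, is to show that the diagonal term $p_{a+k}(w_{\mathcal{A}'})\,p_{\mathcal{A}'}$ in~\eqref{eq:Monk-final} vanishes, i.e. that $p_{a+k}(w_{\mathcal{A}'})=0$. I would establish the general fact that $p_i(w_{\mathcal{A}})=0$ whenever $i\notin\mathcal{A}$: by the commutative diagram~\eqref{eq:comm-diag}, $p_i(w_{\mathcal{A}})$ is the image under $H^*_{T^n}(\pt)\to H^*_{S^1}(\pt)$ of the flag-variety restriction $\sigma_{s_i}\vert_{w_{\mathcal{A}}}$, which itself vanishes unless $s_i\le w_{\mathcal{A}}$ in Bruhat order; and since $w_{\mathcal{A}}$ is the longest element of the parabolic subgroup generated by $\{s_j:j\in\mathcal{A}\}$ (as one checks directly from the one-line description~\eqref{eq:w-oneline}), one has $s_i\le w_{\mathcal{A}}$ precisely when $i\in\mathcal{A}$. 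Applying this with $i=a+k\notin\mathcal{A}'$ kills the diagonal term, yielding $p_{a+k}\,p_{\mathcal{A}'}=(k+1)\,p_{\mathcal{A}}$ and hence $\prod_{j\in\mathcal{A}}p_j=(k+1)!\,p_{\mathcal{A}}$, which completes the induction.
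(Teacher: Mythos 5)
Your proof is correct and follows essentially the same route as the paper: induction on $k$, with the inductive step carried out by applying the Monk formula to $p_{a+k}\cdot p_{\mathcal{A}'}$, noting that only the summand $\mathcal{B}=\mathcal{A}'\cup\{a+k\}=\mathcal{A}$ survives, and computing its coefficient to be $k+1$. The only (harmless) difference is that where the paper disposes of the diagonal term by citing the fact $p_i(w_{\mathcal{A}})=0$ for $i\notin\mathcal{A}$ from \cite{HarTym09}, you supply your own Bruhat-order/support argument for it, which is also valid.
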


We use the following
lemma.

\begin{lemma}\label{lemma:simple monk if i not in A} 
Suppose $i \in \{1,2,\ldots,n-1\}$ and $\mathcal{A} \subseteq
\{1,2,\ldots,n-1\}$. Suppose further that $i \not \in
\mathcal{A}$. Then the Monk relation 
\[
p_{i} \cdot p_{\mathcal{A}} = p_{i}(w_{\mathcal{A}}) \cdot 
p_{\mathcal{A}} + \displaystyle\sum_{\mathcal{A} \subset
    {\mathcal{B}} \textup{ and }
    |{\mathcal{B}}|=|\mathcal{A}|+1} c^{{\mathcal{B}}}_{i,\mathcal{A}}
  \cdot p_{{\mathcal{B}}}
\]
simplifies to 
\begin{equation}\label{eq:final simple monk}
p_{i} \cdot p_{\mathcal{A}} =  c^{{\mathcal{A} \cup \{i\}}}_{i,\mathcal{A}}
  \cdot p_{{\mathcal{A} \cup \{i\}}}.
\end{equation}
\end{lemma}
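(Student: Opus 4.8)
The plan is to read off both halves of the statement directly from the Monk formula of Theorem~\ref{theorem:Monk}, the only nontrivial input being the vanishing of the diagonal coefficient $p_i(w_{\mathcal{A}})$ when $i \notin \mathcal{A}$. Concretely, I must establish two things: first, that every summand $c^{\mathcal{B}}_{i,\mathcal{A}} p_{\mathcal{B}}$ with $\mathcal{B} \neq \mathcal{A}\cup\{i\}$ drops out, leaving only the single term $c^{\mathcal{A}\cup\{i\}}_{i,\mathcal{A}} p_{\mathcal{A}\cup\{i\}}$; and second, that the diagonal term $p_i(w_{\mathcal{A}})\cdot p_{\mathcal{A}}$ itself vanishes.

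For the first point I would note that the sum in~\eqref{eq:Monk-final} ranges over subsets $\mathcal{B} = \mathcal{A}\cup\{k\}$ with $k \in \{1,\ldots,n-1\}\setminus\mathcal{A}$. Fix such a $\mathcal{B}$ with $k \neq i$. Since $i \notin \mathcal{A}$ by hypothesis and $i \neq k$, we have $i \notin \mathcal{A}\cup\{k\} = \mathcal{B}$, so the first bullet of Theorem~\ref{theorem:Monk} gives $c^{\mathcal{B}}_{i,\mathcal{A}} = 0$. Hence the only value of $k$ that can contribute a nonzero coefficient is $k = i$, i.e. $\mathcal{B} = \mathcal{A}\cup\{i\}$ (which is a legitimate index in the sum, as $i \notin \mathcal{A}$ forces $\mathcal{A} \subsetneq \mathcal{A}\cup\{i\}$ with $|\mathcal{A}\cup\{i\}| = |\mathcal{A}|+1$). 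The entire sum therefore collapses to $c^{\mathcal{A}\cup\{i\}}_{i,\mathcal{A}} p_{\mathcal{A}\cup\{i\}}$. This step is immediate and carries no real difficulty.

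The crux, and the one fact not contained in the Monk formula itself, is the vanishing $p_i(w_{\mathcal{A}}) = 0$ for $i \notin \mathcal{A}$. The cleanest route is through the restriction of Schubert classes: $p_i$ is by definition the image of the degree-two Schubert class $\sigma_{s_i}$ under~\eqref{eq:intro-proj}, so $p_i(w_{\mathcal{A}})$ is the specialization to $S^1$ of $\sigma_{s_i}(w_{\mathcal{A}})$, and by the standard upper-triangularity of equivariant Schubert classes one has $\sigma_{s_i}(w_{\mathcal{A}}) = 0$ whenever $s_i \not\leq w_{\mathcal{A}}$ in Bruhat order. I would then identify $\{i : s_i \leq w_{\mathcal{A}}\}$ with the support of $w_{\mathcal{A}}$, and check from the block description~\eqref{eq:w-oneline} that $w_{\mathcal{A}}$ is the product of the longest elements of the parabolic subgroups attached to the maximal consecutive substrings $[a_1,a_2],\ldots,[a_{m-1},a_m]$ of $\mathcal{A}$; the support of such a product is exactly $\mathcal{A}$. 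Consequently $s_i \leq w_{\mathcal{A}}$ if and only if $i \in \mathcal{A}$, so $i \notin \mathcal{A}$ forces $\sigma_{s_i}(w_{\mathcal{A}}) = 0$, whence $p_i(w_{\mathcal{A}}) = 0$.

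Combining the two steps, the diagonal term disappears and the remaining sum is precisely~\eqref{eq:final simple monk}. The only place needing care is this vanishing of the diagonal coefficient. As an alternative to the Bruhat-order discussion one can quote the explicit fixed-point restriction formula for $p_i$ from~\cite{HarTym09}, under which $p_i(w_{\mathcal{A}})$ is a positive multiple of $t$ when $i \in \mathcal{A}$ and is $0$ otherwise; one may also verify directly that $\sigma_{s_i}(w_{\mathcal{A}})$ restricted to $S^1$ is proportional to $\sum_{j\le i} w_{\mathcal{A}}(j) - \binom{i+1}{2}$, which vanishes exactly when $w_{\mathcal{A}}$ stabilizes $\{1,\ldots,i\}$, equivalently when $i$ lies at a boundary between two blocks in~\eqref{eq:w-oneline}, i.e. when $i \notin \mathcal{A}$.
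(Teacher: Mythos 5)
Your proof is correct and follows essentially the same route as the paper: the sum collapses to the single term $\mathcal{B}=\mathcal{A}\cup\{i\}$ because $c^{\mathcal{B}}_{i,\mathcal{A}}=0$ whenever $i\notin\mathcal{B}$, and the diagonal term drops out because $p_i(w_{\mathcal{A}})=0$ when $i\notin\mathcal{A}$. The only difference is that the paper simply cites \cite[Lemma 6.4]{HarTym09} for this last vanishing, whereas you rederive it via the Bruhat-order upper-triangularity of equivariant Schubert classes and the fact that the support of $w_{\mathcal{A}}$ is exactly $\mathcal{A}$; both justifications are valid.
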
 

\begin{proof} 
First observe that the Monk relation 
simplifies to 
\begin{equation}\label{eq:simplified monk}
p_{i} \cdot p_{\mathcal{A}} = \displaystyle\sum_{\mathcal{A} \subset
    {\mathcal{B}} \textup{ and }
    |{\mathcal{B}}|=|\mathcal{A}|+1} c^{{\mathcal{B}}}_{i,\mathcal{A}}
  \cdot p_{{\mathcal{B}}}
\end{equation}
if $i \not \in \mathcal{A}$, since in that case
$p_i(w_{\mathcal{A}}) = 0$ by \cite[Lemma 6.4]{HarTym09}. 
Moreover, from Theorem~\ref{theorem:Monk} we also know that $c_{i,
  \mathcal{A}}^{\mathcal{B}} = 0$ if $i \not \in \mathcal{B}$. 
Hence the summands appearing in~\eqref{eq:simplified monk} correspond to $\mathcal{B}$ satisfying
$\mathcal{A} \subseteq \mathcal{B}, \lvert \mathcal{B} \rvert = \lvert
\mathcal{A} \rvert +1$, and $i \in \mathcal{B}$. On the other hand,
since $i \not \in \mathcal{A}$ by assumption, this means that there is
only one non-zero summand in the right hand side
of~\eqref{eq:simplified monk}, namely, the term corresponding to
$\mathcal{B} = \mathcal{A} \cup \{i\}$. 
The equation~\eqref{eq:final simple monk} follows. 
\end{proof} 

We now prove the main theorem. 

\begin{proof}[Proof of Theorem~\ref{theorem:giambelli}]
  We proceed by induction on $k$. First consider the base case
  where $k=0$. Then $A=\{a\}$, so $p_{v_\mathcal{A}}=p_a$. On the right hand
  side, we have $\frac{1}{(0+1)!}\prod_{j \in \mathcal{A}}p_{j}=p_a$.
This verifies
  the base case.

  By induction, suppose the claim holds for
  $k-1$. We now show that the claim holds for
  $k$. Consider $\mathcal{A}' := \{a,a+1,\ldots,a+k-1\}$ and 
consider the product 
  $p_{a+k} \cdot p_{\mathcal{A}'}$. 
From the Monk
  formula in Theorem~\ref{theorem:Monk} we know
  that 
\begin{equation}\label{eq:expansion}
p_{a+k} \cdot p_{\mathcal{A}'}
=
p_{a+k}(w_{\mathcal{A}'}) \cdot p_{\mathcal{A}'}+\displaystyle\sum_{{\mathcal{A}}'\subseteq
  {\mathcal{B}} \textup{ and } |{\mathcal{B}}|=|{\mathcal{A}}'|+1}c^{{\mathcal{B}}}_{a+k,{\mathcal{A}}'} \cdot p_{\mathcal{B}}.
\end{equation}
On the other hand since by definition $a+k \not \in \mathcal{A}'$, by Lemma~\ref{lemma:simple monk if i not in A} 
the equality~\eqref{eq:expansion} further simplifies to
\[
 p_{a+k} \cdot p_{\mathcal{A}'}=c^{{\mathcal{A}}}_{a+k,{\mathcal{A}}'} \cdot p_{\mathcal{A}}.
\]
Moreover, 
since ${\mathcal{A}}={\mathcal{A}}'\cup \{a+k\}$, we have
$\mathcal{H}_{{\mathcal{A}}}(a+k) = a+k$ and $\toe{\mathcal{A}}{a+k} =
a$. Hence by Theorem~\ref{theorem:Monk}
\begin{equation}
\begin{split}
c^{{\mathcal{A}}}_{a+k,{\mathcal{A}}'} & =\left( \mathcal{H}_{{\mathcal{A}}}(a+k)-(a+k)+1 \right)
\binom{\mathcal{H}_{{\mathcal{A}}}(a+k)-\mathcal{T}_{{\mathcal{A}}}(a+k)+1}{(a+k)-\mathcal{T}_{\mathcal{A}}(a+k)} \\
& = ((a+k)-(a+k)+1) \binom{a+k-a+1}{(a+k)-a} \\
& = k+1.
\end{split}
\end{equation}
Therefore 
\[
p_{a+k} \cdot p_{\mathcal{A}'}=(k+1) \cdot p_{\mathcal{A}}.
\]
By the inductive hypothesis we have for the set
$\mathcal{A}' = \{a, a+1, \ldots, a+k-1\}$ 
\[
p_{\mathcal{A}'} =\frac{1}{k!} \prod_{j \in
  {\mathcal{A}'}}p_{j}.
\]
Substituting into the above equation yields 
\[
p_{\mathcal{A}} = \frac{1}{(k+1)!} \prod_{j \in {\mathcal{A}}}
p_{j} 
\]
as desired. This completes the proof. 
\end{proof}

\begin{remark}\label{remark:normal crossings}
  We thank the referee for the following observation. The
  formula in Theorem~\ref{theorem:giambelli} suggests that the classes
  $p_i$ behave like a normal crossings divisor (up to quotient
  singularities), with all other classes arising (up to rational
  coefficients) as intersections of the components. It would certainly
  be of
  interest to understand more precisely the underlying geometry which
  gives rise not only to the Giambelli relation in
  Theorem~\ref{theorem:giambelli} but also to the original Monk formula
  \cite[Theorem 6.12]{HarTym09}. 
\end{remark}

From Theorem~\ref{theorem:giambelli} it immediately follows that for
any subset
\[ {\mathcal{A}} = [a_1, a_2] \cup [a_3,a_4] \cup \cdots \cup
[a_{m-1},a_m]\] 
with its decomposition into maximal consecutive substrings, we have 
\begin{equation}\label{eq:giambelli for general pA} 
p_{\mathcal{A}} = \frac{1}{(a_2-a_1+1)!} \cdot
\frac{1}{(a_4-a_3+1)!} \cdots \frac{1}{(a_m-a_{m-1}+1)!} \prod_{j \in
  \mathcal{A}} p_j.
\end{equation}
For the purposes of the next section we introduce the
notation 
\begin{equation}\label{eq:def sigma}
\sigma(\mathcal{A}) := \frac{1}{(a_2-a_1+1)!} \cdot
\frac{1}{(a_4-a_3+1)!} \cdots \frac{1}{(a_m-a_{m-1}+1)!}
\end{equation}
for the rational coefficient appearing in~\eqref{eq:giambelli for
  general pA}. The following is an immediate corollary of this
discussion.

\begin{corollary} 
Let 
\[ {\mathcal{A}} = [a_1, a_2] \cup [a_3,a_4] \cup \cdots \cup
[a_{m-1},a_m].\] 
Then 
\[p_{\mathcal{A}} = \sigma(\mathcal{A}) \prod_{j \in \mathcal{A}}
p_j.
\]
\end{corollary}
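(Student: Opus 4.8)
The plan is to derive the Corollary as a direct consequence of Theorem~\ref{theorem:giambelli} together with the product formula~\eqref{eq:product pvA}. First I would invoke~\eqref{eq:product pvA}, which expresses $p_{\mathcal{A}}$ as the product $p_{[a_1,a_2]} \cdot p_{[a_3,a_4]} \cdots p_{[a_{m-1},a_m]}$ over the maximal consecutive substrings of $\mathcal{A}$. This reduces the problem to the single-string case already handled by the main theorem.

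Next I would apply Theorem~\ref{theorem:giambelli} to each factor $p_{[a_{2\ell-1},a_{2\ell}]}$ separately. A maximal consecutive substring $[a_{2\ell-1}, a_{2\ell}]$ is exactly a set of the form $\{a, a+1, \ldots, a+k\}$ with $a = a_{2\ell-1}$ and $a+k = a_{2\ell}$, so $k = a_{2\ell} - a_{2\ell-1}$ and $k+1 = a_{2\ell} - a_{2\ell-1} + 1$. Thus Theorem~\ref{theorem:giambelli} gives
\[
p_{[a_{2\ell-1},a_{2\ell}]} = \frac{1}{(a_{2\ell} - a_{2\ell-1} + 1)!} \prod_{j \in [a_{2\ell-1},a_{2\ell}]} p_j.
\]
Substituting these expressions into the product~\eqref{eq:product pvA} and collecting the scalar factors yields the coefficient $\frac{1}{(a_2-a_1+1)!} \cdots \frac{1}{(a_m-a_{m-1}+1)!}$, which is precisely $\sigma(\mathcal{A})$ as defined in~\eqref{eq:def sigma}. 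Since the maximal consecutive substrings partition $\mathcal{A}$ by definition, the union of their index sets in the products recombines to give $\prod_{j \in \mathcal{A}} p_j$, completing the identification $p_{\mathcal{A}} = \sigma(\mathcal{A}) \prod_{j \in \mathcal{A}} p_j$.

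This argument is essentially bookkeeping, and indeed the text preceding the Corollary already assembles equation~\eqref{eq:giambelli for general pA}; the Corollary merely rewrites it using the shorthand $\sigma(\mathcal{A})$. There is no genuine obstacle here. The only point requiring a moment of care is verifying that the substrings in the decomposition are genuinely disjoint and that no two of them are ``adjacent'' in the sense that would obstruct the application of~\eqref{eq:product pvA}; but this is guaranteed by the definition of maximal consecutive substrings, since if $[a_{2\ell-1},a_{2\ell}]$ and $[a_{2\ell'+1},a_{2\ell'+2}]$ were adjacent (differing by one) they would not be maximal. Hence the hypothesis of \cite[Lemma 6.7]{HarTym09} underlying~\eqref{eq:product pvA} is satisfied, and the factorization is valid.
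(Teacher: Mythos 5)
Your proposal is correct and follows exactly the paper's route: factor $p_{\mathcal{A}}$ over its maximal consecutive substrings via~\eqref{eq:product pvA} and apply Theorem~\ref{theorem:giambelli} to each factor, recovering~\eqref{eq:giambelli for general pA}. Your extra check that distinct maximal substrings are non-adjacent (so the hypothesis of \cite[Lemma 6.7]{HarTym09} holds) is a welcome, if minor, addition to what the paper leaves implicit.
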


\subsection{Simplification of  the Monk relations}\label{subsec:simplify}

In this section we explain how to use the Giambelli formula to
simplify the ring presentation of $H^*_{S^1}(Y)$ given in 
\cite[Section 6]{HarTym09}.
Recall that 
the Peterson Schubert classes $\{p_{\mathcal{A}}\}$ form an additive
module basis for $H^*_{S^1}(Y)$ and the degree $2$ classes
$\{p_i\}_{i=1}^{n-1}$ form a multiplicative basis, so the Monk
relations give a 
presentation of the ring $H^*_{S^1}(Y)$ via generators and
relations as follows.

\begin{theorem}\label{theorem:presentation} \textbf{(\cite[Corollary
    6.14]{HarTym09})} 
Fix $n$ a positive integer. Let $Y$ be
  the Peterson variety in $\Flags(\C^n)$ with the $S^1$-action defined by~\eqref{eq:def-circle}. For ${\mathcal A} \subseteq
  \{1,2,\ldots, n-1\}$, let $v_{\mathcal{A}} \in S_n$ be the
  permutation given in~\eqref{eq:def vA}, and let
  $p_{\mathcal{A}}$ be the corresponding Peterson Schubert class
  in $H^*_{S^1}(Y)$.  Let \(t \in H^*_{S^1}(\pt) \cong \C[t]\) denote
  both the generator of $H^*_{S^1}(\pt)$ and its image \(t \in
  H^*_{S^1}(Y).\) Then the $S^1$-equivariant cohomology $H^*_{S^1}(Y)$
  is given by
\[
H^*_{S^1}(Y) \cong \C[t, \{p_{\mathcal{A}}\}_{\mathcal{A}
  \subseteq \{1,2,\ldots, n-1\}}] \large/ \mathcal{J}
\]
where $\mathcal{J}$ is the ideal generated by the
relations~\eqref{eq:Monk-final}.  
\end{theorem}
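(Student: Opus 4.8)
The plan is to exhibit the ring homomorphism defined by the presentation and prove it is an isomorphism by comparing the two sides as $\C[t]$-modules. First I would define the $\C[t]$-algebra homomorphism
\[
\phi \colon \C[t, \{p_{\mathcal{A}}\}_{\mathcal{A} \subseteq \{1,\ldots,n-1\}}] \to H^*_{S^1}(Y)
\]
sending $t$ to its image in $H^*_{S^1}(Y)$ and each indeterminate $p_{\mathcal{A}}$ to the corresponding Peterson Schubert class. Since the degree-$2$ classes $\{p_i\}_{i=1}^{n-1}$ (which occur among the $p_{\mathcal{A}}$ as the $p_{\{i\}}$) already generate $H^*_{S^1}(Y)$ as a ring, the map $\phi$ is surjective. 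Because the Monk relations~\eqref{eq:Monk-final} are genuine identities in $H^*_{S^1}(Y)$, we have $\mathcal{J} \subseteq \ker\phi$, so $\phi$ descends to a surjection $\bar\phi \colon R := \C[t,\{p_{\mathcal{A}}\}]/\mathcal{J} \onto H^*_{S^1}(Y)$. It then remains to show that $\bar\phi$ is injective.

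The crux is to prove that the $2^{n-1}$ classes $\{p_{\mathcal{A}}\}$ already span $R$ as a $\C[t]$-module. Two observations from earlier in the excerpt make this possible. First, the derivation of the Giambelli formula (Theorem~\ref{theorem:giambelli} and its extension~\eqref{eq:giambelli for general pA}) uses only the Monk relations, so the identity $p_{\mathcal{A}} = \sigma(\mathcal{A}) \prod_{j \in \mathcal{A}} p_{j}$ already holds in $R$; consequently each generator $p_{\mathcal{A}}$ equals a scalar multiple of a monomial in the $p_i$, and $R$ is generated as a $\C[t]$-algebra by $p_1,\ldots,p_{n-1}$ alone. Second, the Monk relation~\eqref{eq:Monk-final} rewrites each product $p_i \cdot p_{\mathcal{B}}$ as a $\C[t]$-linear combination of the basis classes. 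I would then argue by induction on the number of factors that any monomial $p_{i_1}\cdots p_{i_r}$ reduces, modulo $\mathcal{J}$, to a $\C[t]$-linear combination of the $p_{\mathcal{B}}$: processing the factors one at a time, each multiplication by a single $p_i$ is rewritten by the appropriate Monk relation. Combining this with the Giambelli reduction, every element of $R$ becomes a $\C[t]$-linear combination of the $\{p_{\mathcal{A}}\}$.

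Finally I would conclude by a rank comparison. Let $f \colon \C[t]^{2^{n-1}} \onto R$ be the $\C[t]$-module surjection sending the standard basis vector $e_{\mathcal{A}}$ to the class of $p_{\mathcal{A}}$, which is surjective by the spanning statement just established. The composite $\bar\phi \circ f \colon \C[t]^{2^{n-1}} \to H^*_{S^1}(Y)$ sends $e_{\mathcal{A}} \mapsto p_{\mathcal{A}}$, and since $\{p_{\mathcal{A}}\}$ is a free $\C[t]$-module basis of $H^*_{S^1}(Y)$ by the main module-theoretic result of \cite{HarTym09} recalled above, this composite is an isomorphism. In particular $f$ is injective, hence an isomorphism, and therefore $\bar\phi = (\bar\phi \circ f)\circ f^{-1}$ is an isomorphism of rings, which is exactly the asserted presentation.

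The hard part will be the spanning step: one must check carefully that every rewrite used there is a consequence of the stated generators of $\mathcal{J}$ — that is, that both the Giambelli identity and all intermediate Monk rewrites follow from the Monk relations themselves, rather than from relations holding only in $H^*_{S^1}(Y)$ — and that the reduction genuinely terminates in a $\C[t]$-linear combination of the $p_{\mathcal{A}}$. Once the spanning is in place, the remaining deduction of injectivity from surjectivity and the freeness of the target is formal.
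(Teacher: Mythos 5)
This theorem is quoted from \cite[Corollary 6.14]{HarTym09} and the paper supplies no proof of it, so there is no internal argument to compare yours against; what you have written is a correct reconstruction along the standard lines. Your skeleton --- surjectivity of $\bar\phi$ from the multiplicative generation by the $p_i$, the containment $\mathcal{J}\subseteq\ker\phi$, $\C[t]$-module spanning of $R=\C[t,\{p_{\mathcal{A}}\}]/\mathcal{J}$ by the classes $p_{\mathcal{A}}$, and the formal rank comparison against the free module $H^*_{S^1}(Y)\cong\C[t]^{2^{n-1}}$ --- is exactly what is needed, and the final deduction of injectivity is airtight as you state it. The one point in your flagged ``hard part'' that genuinely requires checking is the spanning step: to reduce a product $p_{\mathcal{A}_1}p_{\mathcal{A}_2}$ you must first rewrite $p_{\mathcal{A}_1}$ as a polynomial in the $p_i$ \emph{modulo $\mathcal{J}$}, i.e.\ you need the Giambelli identity for arbitrary $\mathcal{A}$, not only for a single consecutive string. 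For general $\mathcal{A}$ the paper's route passes through the factorization \eqref{eq:product pvA}, which it attributes to \cite[Lemma 6.7]{HarTym09} rather than deriving from the Monk relations, so for a self-contained proof you must verify that this factorization already holds in $R$. It does: adjoining the elements of $\mathcal{A}$ to $\emptyset$ one at a time and applying Lemma~\ref{lemma:simple monk if i not in A} together with the explicit values of the coefficients $c^{\mathcal{B}}_{i,\mathcal{A}}$ yields $p_{\mathcal{A}}=\sigma(\mathcal{A})\prod_{j\in\mathcal{A}}p_j$ modulo $\mathcal{J}$ (this is the same case analysis carried out in the proof of Lemma~\ref{lemma:some q are zero}). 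With that in place, each multiplication by a single $p_i$ is resolved by one application of \eqref{eq:Monk-final}, the induction on the number of degree-two factors terminates, and your argument is complete.
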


In order to state the main result of this section we introduce some
notation. For $i$ with $1 \leq i \leq n-1$ and $\mathcal{A} \subseteq
\{1,2,\ldots,n-1\}$ define 
\[
m_{i,\mathcal{A}} := 
p_i \cdot p_{\mathcal{A}}  - p_i(w_{\mathcal{A}}) \cdot 
p_{\mathcal{A}} -  \sum_{{\mathcal{A}} \subsetneq {\mathcal{B}} \textup{ and } |{\mathcal{B}}|
  = |{\mathcal{A}}|+1} c^{\mathcal{B}}_{i,{\mathcal{A}}} \cdot
p_{\mathcal{B}} 
\]
thought of as an element in $\C[t, \{p_{\mathcal{A}}\}_{\mathcal{A}
  \subseteq \{1,2,\ldots,n-1\}}]$ 
where the $c_{i,\mathcal{A}}^{\mathcal{B}} \in \C[t]$ are the
coefficients computed in Theorem~\ref{theorem:Monk}. Motivated by the
Giambelli formula we also define the following elements in $\C[t, p_1,
p_2,\ldots, p_{n-1}]$:
\[
q_{i,\mathcal{A}} := 
p_i \cdot \sigma(\mathcal{A}) \cdot \left( \prod_{j \in \mathcal{A}} p_j \right) -
p_i(w_{\mathcal{A}}) \cdot 
\sigma(\mathcal{A}) \cdot \left( \prod_{j \in \mathcal{A}} p_j \right)
 -  \sum_{{\mathcal{A}} \subsetneq {\mathcal{B}} \textup{ and } |{\mathcal{B}}|
  = |{\mathcal{A}}|+1} c^{\mathcal{B}}_{i,{\mathcal{A}}} \cdot
\sigma(\mathcal{B}) \left( \prod_{k \in \mathcal{B}} p_k \right) 
\]
where the $\sigma(\mathcal{A}) \in \Q$ is the constant defined
in~\eqref{eq:def sigma}.

\begin{example}
Let $n=4$ and $i=1$ and $\mathcal{A} = \{1,2\}$. Consider 
\[
m_{1, \{1,2\}} = p_1 p_{v_{\{1,2\}}} - 2t \, p_{v_{\{1,2\}}} + p_{v_{\{1,2,3\}}}. 
\]
Expanding 
in terms of the Giambelli formula, we obtain 
\[
q_{1, \{1,2\}} = \frac{1}{2} p_{1}^2p_{2} - 
2t \cdot \left( \frac{1}{2} p_{1}p_{2} \right) +\frac{1}{6} 
p_{1}p_{2}p_3 = t \, p_1 p_2 + \frac{1}{6} p_1 p_2 p_3. 
\]
\end{example}

The main theorem of this section gives a ring
presentation of $H^*_{S^1}(Y)$ using fewer generators and fewer
relations than that in Theorem~\ref{theorem:presentation}. More specifically let
$\mathcal{K}$ denote the ideal in $\C[t,p_1, \ldots, p_{n-1}]$
generated by the $q_{i,\mathcal{A}}$ for which $i \in
\mathcal{A}$, i.e., 
\begin{equation}\label{eq:def K} 
\mathcal{K} := \bigg\langle q_{i,\mathcal{A}} \, \bigg\vert \, 1 \leq i \leq
n-1, \mathcal{A} \subseteq \{1,2,\ldots, n-1\}, i \in \mathcal{A}
\bigg\rangle \subseteq \C[t,p_1, \ldots, p_{n-1}].
\end{equation}
We have the following. 

\begin{theorem}\label{theorem:simplify Monk}
Fix $n$ a positive integer. Let $Y$ be the Peterson variety in $\Flags(\C^n)$ equipped with the
action of the $S^1$ in~\eqref{eq:def-circle}.
Then the $S^1$-equivariant cohomology $H^*_{S^1}(Y)$ is isomorphic to the ring 
\[
\C[t, p_1, p_2, \ldots, p_{n-1}]/\mathcal{K}
\]
where $\mathcal{K}$ is the ideal in~\eqref{eq:def K}. 
\end{theorem}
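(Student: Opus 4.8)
The plan is to manufacture the claimed presentation from the one in Theorem~\ref{theorem:presentation} by using the Giambelli formula (Theorem~\ref{theorem:giambelli}) to eliminate every generator $p_{\mathcal{A}}$ with $|\mathcal{A}|\ge 2$, retaining only the degree-$2$ classes. Write $R:=\C[t,\{P_{\mathcal{A}}\}_{\mathcal{A}\subseteq\{1,\dots,n-1\}}]$ for the ambient polynomial ring of Theorem~\ref{theorem:presentation}, so that $H^*_{S^1}(Y)\cong R/\mathcal{J}$, and set $S:=\C[t,p_1,\dots,p_{n-1}]$. Define a $\C[t]$-algebra retraction $\rho\colon R\to S$ by $\rho(P_{\mathcal{A}})=\sigma(\mathcal{A})\prod_{j\in\mathcal{A}}p_j$. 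Since $\sigma(\{i\})=1$ we have $\rho(P_{\{i\}})=p_i$, so $\rho$ is surjective, and by the very definition of the $q_{i,\mathcal{A}}$ we have $\rho(m_{i,\mathcal{A}})=q_{i,\mathcal{A}}$. The kernel of $\rho$ is the \emph{Giambelli ideal} $\mathcal{G}:=\bigl\langle\, P_{\mathcal{A}}-\sigma(\mathcal{A})\prod_{j\in\mathcal{A}}p_j \,\bigr\rangle$, and $\rho$ induces an isomorphism $R/\mathcal{G}\xrightarrow{\ \sim\ }S$.

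The first key step is to show $\mathcal{G}\subseteq\mathcal{J}$, i.e.\ that each Giambelli relation is an $R$-linear combination of Monk relations. This is exactly Theorem~\ref{theorem:giambelli} read inside $R/\mathcal{J}$: the induction proving that theorem, combined with the factorization~\eqref{eq:product pvA} of $p_{\mathcal{A}}$ over its maximal consecutive substrings, rewrites $P_{\mathcal{A}}-\sigma(\mathcal{A})\prod_{j\in\mathcal{A}}p_j$ modulo $\mathcal{J}$ using only the simplified Monk relations of Lemma~\ref{lemma:simple monk if i not in A}. Granting $\mathcal{G}\subseteq\mathcal{J}$, the third isomorphism theorem gives
\[
H^*_{S^1}(Y)\cong R/\mathcal{J}\cong (R/\mathcal{G})\big/(\mathcal{J}/\mathcal{G})\cong S/\rho(\mathcal{J}),
\]
and since $\rho$ is surjective, $\rho(\mathcal{J})=\bigl\langle q_{i,\mathcal{A}} : 1\le i\le n-1,\ \mathcal{A}\subseteq\{1,\dots,n-1\}\bigr\rangle$.

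It then remains to pare this generating set down, and this is the step I expect to carry the real content. The claim is that $q_{i,\mathcal{A}}=0$ whenever $i\notin\mathcal{A}$, so that those relations are vacuous and only the relations with $i\in\mathcal{A}$ survive; these are the generators of $\mathcal{K}$, giving $\rho(\mathcal{J})=\mathcal{K}$. To verify the vanishing, fix $i\notin\mathcal{A}$ and put $\mathcal{B}:=\mathcal{A}\cup\{i\}$. By Lemma~\ref{lemma:simple monk if i not in A} the expansion of $\rho(m_{i,\mathcal{A}})$ has only two surviving terms (the constant term drops out since $p_i(w_{\mathcal{A}})=0$), and because $\prod_{k\in\mathcal{B}}p_k=p_i\prod_{j\in\mathcal{A}}p_j$, their difference vanishes precisely when $c^{\mathcal{B}}_{i,\mathcal{A}}\,\sigma(\mathcal{B})=\sigma(\mathcal{A})$. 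Taking $k=i$ in~\eqref{eq:cBiA-formula-part1}, this reduces to the single numerical identity
\[
\frac{\sigma(\mathcal{A})}{\sigma(\mathcal{A}\cup\{i\})}=\bigl(\mathcal{H}_{\mathcal{B}}(i)-i+1\bigr)\binom{\mathcal{H}_{\mathcal{B}}(i)-\mathcal{T}_{\mathcal{B}}(i)+1}{\,i-\mathcal{T}_{\mathcal{B}}(i)\,},
\]
which is a direct factorial computation comparing~\eqref{eq:cBiA-formula-part1} with the definition~\eqref{eq:def sigma} of $\sigma$: on the substring $[\mathcal{T}_{\mathcal{B}}(i),\mathcal{H}_{\mathcal{B}}(i)]$ of $\mathcal{B}$, deleting $i$ splits one factor $1/(\mathcal{H}_{\mathcal{B}}(i)-\mathcal{T}_{\mathcal{B}}(i)+1)!$ into $1/\bigl((i-\mathcal{T}_{\mathcal{B}}(i))!\,(\mathcal{H}_{\mathcal{B}}(i)-i)!\bigr)$.

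The main obstacle is organizational rather than deep: one must check the identity uniformly across the three ways that adjoining $i$ can create, extend, or merge maximal consecutive substrings of $\mathcal{A}$, and confirm that no index $\mathcal{B}\ne\mathcal{A}\cup\{i\}$ contributes to $\rho(m_{i,\mathcal{A}})$ (both already handled by Lemma~\ref{lemma:simple monk if i not in A} and by tracking $\mathcal{T}_{\mathcal{B}},\mathcal{H}_{\mathcal{B}}$). Once this is in place, assembling the isomorphisms above yields $H^*_{S^1}(Y)\cong S/\mathcal{K}$. Equivalently, and perhaps more robustly, one can phrase the conclusion symmetrically by exhibiting the two mutually inverse homomorphisms $\overline{\rho}\colon H^*_{S^1}(Y)\to S/\mathcal{K}$ (well-defined because $\rho$ kills $\mathcal{G}\subseteq\mathcal{J}$ and sends each $m_{i,\mathcal{A}}$ into $\mathcal{K}$, using the vanishing above) and the map $p_i\mapsto p_i\colon S/\mathcal{K}\to H^*_{S^1}(Y)$ (well-defined because the Monk relations of Theorem~\ref{theorem:Monk} and the Giambelli formula hold in $H^*_{S^1}(Y)$), and checking that both composites fix the generators $p_i$ and $p_{\mathcal{A}}$.
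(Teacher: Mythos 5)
Your proposal is correct and follows essentially the same route as the paper: pass from the presentation of Theorem~\ref{theorem:presentation} to $\C[t,p_1,\dots,p_{n-1}]$ modulo the images $q_{i,\mathcal{A}}$ of the Monk relations via the Giambelli substitution, then discard the relations with $i\notin\mathcal{A}$ because they vanish identically (the paper's Lemma~\ref{lemma:some q are zero}), whose four-case verification you compress into the single factorial identity $c^{\mathcal{A}\cup\{i\}}_{i,\mathcal{A}}\,\sigma(\mathcal{A}\cup\{i\})=\sigma(\mathcal{A})$. One remark: you (correctly) take the surviving generators of $\mathcal{K}$ to be the $q_{i,\mathcal{A}}$ with $i\in\mathcal{A}$, whereas the displayed definition~\eqref{eq:def K} literally says $i\notin\mathcal{A}$ --- an evident typo in the paper, as the count of twelve generators in the $n=4$ example confirms.
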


To prove the theorem we need the following lemma. 

\begin{lemma}\label{lemma:some q are zero} 
Let $i \in \{1,2,\ldots,n-1\}$ and $\mathcal{A}
\subseteq \{1,2,\ldots, n-1\}$. Suppose $i \not \in \mathcal{A}$. Then
$q_{i, \mathcal{A}} = 0$ in $\C[t, p_1, p_2, \ldots, p_{n-1}]$. 
\end{lemma}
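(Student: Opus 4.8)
The plan is to collapse $q_{i,\mathcal{A}}$ to a single monomial times a rational scalar, and then show that scalar is zero. First I would apply Lemma~\ref{lemma:simple monk if i not in A}: because $i \notin \mathcal{A}$, the term $p_i(w_{\mathcal{A}})$ vanishes and the only surviving coefficient $c^{\mathcal{B}}_{i,\mathcal{A}}$ in the defining sum is the one for $\mathcal{B} := \mathcal{A} \cup \{i\}$. Writing out the definition of $q_{i,\mathcal{A}}$, this leaves
\[
q_{i,\mathcal{A}} = \sigma(\mathcal{A}) \cdot p_i \prod_{j \in \mathcal{A}} p_j - c^{\mathcal{B}}_{i,\mathcal{A}} \cdot \sigma(\mathcal{B}) \prod_{k \in \mathcal{B}} p_k.
\]

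Since $i \notin \mathcal{A}$ we have $p_i \prod_{j \in \mathcal{A}} p_j = \prod_{k \in \mathcal{B}} p_k$, so both terms carry the same monomial and
\[
q_{i,\mathcal{A}} = \bigl(\sigma(\mathcal{A}) - c^{\mathcal{B}}_{i,\mathcal{A}}\,\sigma(\mathcal{B})\bigr) \prod_{k \in \mathcal{B}} p_k.
\]
At this point the lemma is equivalent to the purely numerical identity $\sigma(\mathcal{A}) = c^{\mathcal{B}}_{i,\mathcal{A}}\,\sigma(\mathcal{B})$ between rational numbers, which is what I would prove next.

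To establish that identity I would let $[t_0,h]$ be the maximal consecutive substring of $\mathcal{B}$ containing $i$ and set $L := i - t_0$, $R := h - i$. Applying Theorem~\ref{theorem:Monk} with $k = i$ (so $\toe{\mathcal{B}}{i} = t_0$, $\head{\mathcal{B}}{i} = h$, and we fall in the range $k \leq i \leq \head{\mathcal{B}}{k}$) yields
\[
c^{\mathcal{B}}_{i,\mathcal{A}} = (h - i + 1)\binom{h - t_0 + 1}{i - t_0} = (R + 1)\binom{L + R + 1}{L}.
\]
On the other side, removing $i$ from $\mathcal{B}$ breaks the single substring $[t_0,h]$ of length $L+R+1$ into $[t_0,i-1]$ and $[i+1,h]$ of lengths $L$ and $R$ (either possibly empty), while every other maximal substring is unchanged; so directly from~\eqref{eq:def sigma}, with the convention $0! = 1$,
\[
\frac{\sigma(\mathcal{A})}{\sigma(\mathcal{B})} = \frac{(L + R + 1)!}{L!\,R!}.
\]
These two quantities coincide by the elementary identity $(R+1)\binom{L+R+1}{L} = \tfrac{(L+R+1)!}{L!\,R!}$, finishing the proof.

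I expect the main (though minor) obstacle to be the bookkeeping in the previous paragraph: one must check that inserting $i$ affects exactly the one substring of $\mathcal{B}$ containing it. Phrasing everything through $L$ and $R$ handles all configurations at once—$i$ isolated ($L=R=0$), $i$ extending a string on one side ($L=0$ or $R=0$), and $i$ merging two adjacent strings ($L,R>0$)—so no separate case analysis is needed, and the only risk is an off-by-one slip in the head/tail values or the string lengths.
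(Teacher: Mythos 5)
Your proof is correct and follows the same route as the paper: reduce via Lemma~\ref{lemma:simple monk if i not in A} to the single scalar identity $\sigma(\mathcal{A}) = c^{\mathcal{A}\cup\{i\}}_{i,\mathcal{A}}\,\sigma(\mathcal{A}\cup\{i\})$ and then verify it from the Monk coefficient formula. The only difference is cosmetic but welcome: your $L$, $R$ parametrization of the substring containing $i$ collapses the paper's four-case analysis (isolated $i$, extension on either side, gluing two substrings) into the single identity $(R+1)\binom{L+R+1}{L} = \frac{(L+R+1)!}{L!\,R!}$, and all the head/tail and length bookkeeping checks out.
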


\begin{proof}
Since $i \not \in \mathcal{A}$ by assumption, Lemma~\ref{lemma:simple
  monk if i not in A} implies that  
\[
m_{i, \mathcal{A}} = p_{i} \cdot p_{\mathcal{A}} - p_{i}(w_{\mathcal{A}})\cdot
p_{\mathcal{A}} - \displaystyle\sum_{\mathcal{A} \subset
    {\mathcal{B}} \textup{ and }
    |{\mathcal{B}}|=|\mathcal{A}|+1} c^{{\mathcal{B}}}_{i,\mathcal{A}}
  \cdot p_{{\mathcal{B}}}
\]
simplifies to 
\begin{equation}
m_{i, \mathcal{A}} = p_{i} \cdot p_{\mathcal{A}} -  c^{{\mathcal{A} \cup \{i\}}}_{i,\mathcal{A}}
  \cdot p_{{\mathcal{A} \cup \{i\}}}.
\end{equation}
Thus in order to compute the corresponding $q_{i,\mathcal{A}}$ 
it remains to compute $c^{{\mathcal{A} \cup \{i\}}}_{i,\mathcal{A}}$
and apply the Giambelli formula.

Let \({\mathcal{A}} = [a_1, a_2] \cup [a_3,a_4] \cup \cdots \cup
[a_{m-1},a_m]\) be the decomposition of $\mathcal{A}$ into maximal
consecutive substrings. Consider the decomposition of 
$\mathcal{A} \cup \{i\}$ into maximal
consecutive substrings. There are several cases to consider:
\begin{enumerate} 
\item the singleton set $\{i\}$ is a maximal consecutive substring of
  $\mathcal{A} \cup \{i\}$, i.e. $i-1 \not \in \mathcal{A}$ and $i+1
  \not \in \mathcal{A}$, 
\item the inclusion of $i$ extends a maximal consecutive substring to its right
  by $1$ element,
  i.e., there exists a maximal consecutive string $[a_\ell, a_{\ell+1}]
  \subseteq \mathcal{A}$ such that $i = a_{\ell+1}+1$ and that 
  $[a_\ell, i]$ is a maximal consecutive substring of $\mathcal{A}
  \cup \{i\}$, 
\item the inclusion of $i$ extends a maximal consecutive substring to
  its left 
  by $1$ element,
  i.e., there exists a maximal consecutive string $[a_\ell, a_{\ell+1}]
  \subseteq \mathcal{A}$ such that $i = a_{\ell}-1$ and that 
  $[i, a_{\ell+1}]$ is a maximal consecutive substring of $\mathcal{A}
  \cup \{i\}$, or 
\item the inclusion of $i$ glues together two maximal consecutive substrings of
  $\mathcal{A}$, i.e., there exist two maximal consecutive substrings
  $[a_{\ell}, a_{\ell+1}], [a_{\ell+2}, a_{\ell+3}]$ such that
  $i=a_{\ell+1}+1 = a_{\ell+2}-1$ and hence $[a_\ell, a_{\ell+3}] =
  [a_{\ell}, a_{\ell+1}] \cup \{i\} \cup [a_{\ell+2}, a_{\ell+3}]$ is
  a maximal consecutive substring of $\mathcal{A} \cup \{i\}$. 
\end{enumerate} 
We consider each case separately. 

Case (1): Suppose $\{i\}$ is a maximal consecutive substring in $\mathcal{A} \cup \{i\}$. 
 In this case, the coefficient $c^{{\mathcal{A} \cup
      \{i\}}}_{i,\mathcal{A}}$ is $1$. 
Hence we have 
\[
m_{i, \mathcal{A}} = p_{i} \, p_{\mathcal{A}} - p_{v_{\mathcal{A} \cup \{i\}}}.
\]
Since $\{i\}$ is a maximal consecutive substring in $\mathcal{A} \cup \{i\}$, we have $\sigma(\mathcal{A}) = \sigma(\mathcal{A} \cup \{i\})$. We conclude
\[
q_{i, \mathcal{A}} = p_i \cdot  \left( \sigma(\mathcal{A}) \cdot \left( \prod_{j \in \mathcal{A}} p_j \right) \right)
 -  \sigma(\mathcal{A} \cup \{i\}) \cdot \left( \prod_{j \in \mathcal{A} \cup \{i\}}
    p_j \right) = 0 
\]
as desired. 

Cases (2) and (3) are very similar, so we only present the argument
for case (2). Suppose $i$ extends a maximal consecutive substring
$[a_\ell, a_{\ell+1}]$ of
$\mathcal{A}$ to its right. 
% The $s_{l}$ be the element such that
% $\{i-1-s_{l}\ldots i-1\}\in A$ and $i-2-s_{l}\not\in A$ Once again
% $p_{i}(w_{A})$ is zero since $i\not\in A$. The only non-zero
% $c^{{\mathcal{B}}}_{j,A}$ is for $j=i$. 
Then 
\[
m_{i, \mathcal{A}} = p_i \cdot p_{\mathcal{A}} - (i - a_\ell + 1) p_{v_{\mathcal{A} \cup \{i\}}}
\]
since $k=i=\head{\mathcal{B}}{i}$ and $\toe{\mathcal{B}}{i} =
a_\ell$ so $c_{i, \mathcal{A}}^{\mathcal{A} \cup \{i\}} =
i-a_\ell+1$. 
We compute
\begin{equation*}
\begin{split}
q_{i, \mathcal{A}} & = p_i \, \left( \left( \prod_{1 \leq s \leq m-1, \, s \textup{ odd}} \frac{1}{(a_{s+1}-a_s
      +1)!} \right) \cdot \left( \prod_{j \in \mathcal{A}} p_j \right)
\right) \\ 
 & - 
(i - a_\ell+1) \cdot \left( \prod_{1 \leq s \leq m-1, \, s \textup{ odd and } s\neq \ell} \frac{1}{(a_{s+1}-a_s
      +1)!} \right) \cdot \left(\frac{1}{(i - a_\ell + 1)!} \right)
  \cdot \left( \prod_{j \in \mathcal{A} \cup \{i\}} p_j \right)
\end{split}
\end{equation*}
where one of the factors in the product in the second expression has changed because the
maximal consecutive string $[a_\ell, a_{\ell+1}]$ has been extended in 
$\mathcal{A} \cup \{i\}$. Since 
\[
(i - a_\ell+1) \left(\frac{1}{(i - a_\ell + 1)!} \right) =
\frac{1}{(a_{\ell+1}-a_\ell+1)!} 
\]
by assumption on $i$, we conclude $q_{i,\mathcal{A}} = 0$ 
as desired.

Finally, consider the case (4) in which the inclusion of $i$ glues
together two maximal consecutive substrings $[a_\ell, a_{\ell+1}],
[a_{\ell+2}, a_{\ell+3}]$ in $\mathcal{A}$. 
In this case, \(k=i, \head{\mathcal{B}}{i} = a_{\ell+3},
\toe{\mathcal{B}}{i} = a_\ell.\) Hence the coefficient $c_{i,
  \mathcal{A}}^{\mathcal{A} \cup \{i\}}$ is 
\[
c_{i,
  \mathcal{A}}^{\mathcal{A} \cup \{i\}} = (a_{\ell+3} - i+1)
\binom{a_{\ell+3}-a_\ell+1}{i-a_\ell} =
\frac{(a_{\ell+3}-a_\ell+1)!}{(i-a_\ell)! (a_{\ell+3}-i)!}.
\]
The expansion of $p_i \cdot p_{\mathcal{A}}$ is 
the same as in the previous cases. 
The term corresponding to $c^{{\mathcal{A} \cup \{i\}}}_{i,\mathcal{A}}
  \cdot p_{{\mathcal{A} \cup \{i\}}}$ is 
\[
\frac{(a_{\ell+3}-a_\ell+1)!}{(i-a_\ell)! (a_{\ell+3}-i)!}
\cdot \left( \prod_{1 \leq s \leq m-1, \, s \textup{ odd
      and }
    s\neq \ell, \ell+2} \frac{1}{(a_{s+1}-a_s
      +1)!} \right) \cdot \left(\frac{1}{(a_{\ell+3} - a_\ell + 1)!} \right)
  \cdot \left( \prod_{j \in \mathcal{A} \cup \{i\}} p_j \right).
\]
Since by assumption on $i$ we have $i=a_{\ell+1}+1 = a_{\ell+2}-1$, we
obtain the simplification 
\begin{equation}
\begin{split}
\frac{(a_{\ell+3}-a_\ell+1)!}{(i-a_\ell)! (a_{\ell+3}-i)!}
\left(\frac{1}{(a_{\ell+3} - a_\ell + 1)!} \right) & =
\frac{1}{(i-a_\ell)! (a_{\ell+3}-i)!} \\
 & = \frac{1}{(a_{\ell+1}-a_\ell+1)!} \cdot \frac{1}{(a_{\ell+3} -
   a_{\ell+2} +1)!}
\end{split}
\end{equation}
from which it follows that $q_{i,\mathcal{A}} = 0$ also in this case.
The result follows.

  \end{proof}

\begin{example}
Let $n=5$, $i=4$ and let $\mathcal{A} = \{1,2\}$. 
Consider 
\[
m_{4, \{1,2\}} = p_4 \cdot p_{v_{\{1,2\}}} - c_{4,\{1,2\}}^{\{1,2,4\}} \cdot p_{v_{\{1,2,4\}}}.
\]
From~\eqref{eq:cBiA-formula-part1} it follows that $c_{4,\{1,2\}}^{\{1,2,4\}} =1$. The corresponding $q_{4, \{1,2\}}$ can be computed to be 
\[
q_{4, \{1,2\}} = p_4 \left( \frac{1}{2!} p_1 p_2 \right) - \left( \frac{1}{2!} p_1 p_2
\right) p_4 = 0. 
\]
\end{example}

We may now prove Theorem~\ref{theorem:simplify Monk}.

\begin{proof}[Proof of Theorem~\ref{theorem:simplify Monk}]
  By Theorem~\ref{theorem:presentation} we know 
\[
H^*_{S^1}(Y) \cong \C[t, \{p_{\mathcal{A}}\}_{\mathcal{A}
  \subseteq \{1,2,\ldots, n-1\}}] \large/ \mathcal{J}
\]
where $\mathcal{J}$ is the ideal generated by the
relations~\eqref{eq:Monk-final} so we wish to prove
\[
\C[t,p_1, \ldots, p_{n-1}]/\mathcal{K} \cong \C[t, \{p_{\mathcal{A}}\}_{\mathcal{A}
  \subseteq \{1,2,\ldots, n-1\}}] \large/ \mathcal{J}.
\]
The content of the Giambelli formula (Theorem~\ref{theorem:giambelli})
is that the expressions 
\[
p_{\mathcal{A}} - \sigma(\mathcal{A}) \prod_{j \in \mathcal{A}}
p_j
\]
are elements of $\mathcal{J}$. Hence 
\begin{equation*}
  \begin{split}
    \mathcal{J} &= \bigg\langle m_{i,\mathcal{A}} \, \bigg\vert \, 1 \leq i \leq
    n-1, \mathcal{A} \subseteq \{1,2,\ldots,n-1\} \bigg\rangle + \bigg\langle p_{\mathcal{A}} - \sigma(\mathcal{A}) \prod_{j \in \mathcal{A}}
p_j \, \bigg\vert \, 1 \leq i \leq
    n-1, \mathcal{A} \subseteq \{1,2,\ldots,n-1\} \bigg\rangle \\
 & = \bigg\langle q_{i, \mathcal{A}} \, \bigg\vert \, 1 \leq i \leq
    n-1, \mathcal{A} \subseteq \{1,2,\ldots,n-1\} \bigg\rangle + \bigg\langle p_{\mathcal{A}} - \sigma(\mathcal{A}) \prod_{j \in \mathcal{A}}
p_j \, \bigg\vert \, 1 \leq i \leq
    n-1, \mathcal{A} \subseteq \{1,2,\ldots,n-1\} \bigg\rangle.
 \end{split}
\end{equation*}
We therefore have
\[
\frac{\C[t, \{p_{\mathcal{A}}\}_{\mathcal{A}
  \subseteq \{1,2,\ldots, n-1\}}]}{\mathcal{J}} \cong \frac{\C[t,p_1,
\ldots, p_{n-1}]}{\bigg\langle q_{i, \mathcal{A}} \, \bigg\vert \, 1 \leq i \leq
    n-1, \mathcal{A} \subseteq \{1,2,\ldots,n-1\} \bigg\rangle}
\]
but since $q_{i,\mathcal{A}} = 0$ if $i \not \in \mathcal{A}$ by
Lemma~\ref{lemma:some q are zero} we conclude 
\[
\bigg\langle q_{i, \mathcal{A}} \, \bigg\vert \, 1 \leq i \leq
    n-1, \mathcal{A} \subseteq \{1,2,\ldots,n-1\} \bigg\rangle = 
\bigg\langle q_{i, \mathcal{A}} \, \bigg\vert \, 1 \leq i \leq
    n-1, \mathcal{A} \subseteq \{1,2,\ldots,n-1\} \textup{ and } i
    \in \mathcal{A} \bigg\rangle
\]
from which the result follows. 
\end{proof}

We illustrate the theorem by an example.

\begin{example}
Let $n=4$ and $Y$ the Peterson variety in $\Flags(\C^4)$. Then the degree-$2$ multiplicative generators are $p_1,
p_2$, and $p_3$. Then the statement of Theorem~\ref{theorem:simplify
  Monk} yields a presentation of the equivariant cohomology ring of
$Y$ as 
\[
H^*_{S^1}(Y) \cong \C[t, p_1, p_2, p_3]/\mathcal{K}
\]
where $\mathcal{K}$ is the ideal generated by the following $12$ elements: 
\begin{gather*}
2\, p_1^2 - 2t \, p_1 - p_1 p_2, \\
2 \, p _2^2 - 2t \, p_2 - p_1 p_2 - p_2 p_3, \\
2 \, p_3^2 - 2t \, p_3 - p_2 p_3, \\
3 \, p_1^2 p_2 - 6t \, p_1 p_2 - p_1 p_2 p_3, \\
3 \, p_1 p_2^2 - 6t \, p_1 p_2 - 2 \, p_1 p_2 p_3, \\
2 \, p_1^2 p_3 - 2t \, p_1 p_3 - p_1 p_2 p_3, \\
2 \, p_1 p_3^2 - 2t \, p_1 p_3 - p_1 p_2 p_3, \\
3 \, p_2^2 p_3 - 6t \, p_2 p_3 - 2 \, p_1 p_2 p_3, \\
3 \, p_2 p_3^2 - 6t \, p_2 p_3 - p_1 p_2 p_3, \\
p_1^2 p_2 p_3 - 3t \, p_1 p_2 p_3, \\
p_1 p_2^2 p_3 - 4t \, p_1 p_2 p_3, \\
p_1 p_2 p_3^2 - 3t \, p_1 p_2 p_3.
\end{gather*}
This list is not minimal: for instance, one can immediately see the sixth
and seventh expressions in this list are multiples of the first and
third ones, so evidently are unnecessary for defining the ideal
$\mathcal{K}$. In fact, more is true: a Macaulay 2 computation shows that the ideal
$\mathcal{K}$ is in fact generated by just the \emph{quadratic}
relations, 
i.e. the first three elements in the above list. (We thank the
referee for pointing this out.) 
Note the original presentation given in
Theorem~\ref{theorem:presentation} uses $8$ generators and $24$
relations, so this discussion shows that our presentation indeed gives
a simplification of the description of the ring. 
\end{example}

\begin{remark}\label{remark:quadratic}
We thank the referee for the following comment. Based on our Giambelli
formula, 
Theorem~\ref{theorem:simplify Monk}, and the example of $n=4$
discussed above, it seems natural to conjecture that for any value of
$n$, the corresponding ideal $\mathcal{K}$ is generated by just the
quadratic relations. 
Using Macaulay 2, we have verified that the conjecture holds for a
range of small values of $n$, but we were unable to give a 
proof for the general case. If the conjecture is true, then it would be a very significant
simplification of the presentation of this ring 
and would lead to many interesting geometric and combinatorial
questions. 
\end{remark}

\section{Stirling numbers of the second kind}\label{sec:stirling}

In this section we prove that Stirling numbers of the
second kind appear in the multiplicative structure of the ring
$H^*_{S^1}(Y)$. We learned this result from H. Naruse and do not claim
originality, though the proof given is our own. 
The \textbf{Stirling number of the second kind}, which we denote
$S(n,k)$, counts the number of ways to partition a set of $n$ elements
into $k$ nonempty subsets (see e.g. \cite[Section 1.2.6]{Knu73}). 
For example, $S(3,2)$ is the
number of ways to put balls labelled $1$, $2$, and $3$ into two
identical boxes such that each box contains at least one ball. It is
then easily seen that $S(3,2)=3$. We have the following.

\begin{theorem}
  Fix $n$ a positive integer. Let $Y$ be the Peterson variety in
  $\Flags(\C^n)$ equipped with the action of the $S^1$
  in~\eqref{eq:def-circle}. For ${\mathcal A} \subseteq \{1,2,\ldots,
  n-1\}$, let $v_{\mathcal{A}}, p_{\mathcal{A}}$ be as in
  Theorem~\ref{theorem:presentation}.  The following equality holds in
  $H^*_{S^1}(Y)$ for any $k$ with $1 \leq k \leq n-1$:
\begin{equation}\label{eq:stirling}
p_{1}^{k}=\sum_{j=1}^{k} S(k,j) t^{k-j} \, p_{v_{[1,j]}}.
\end{equation}
\end{theorem}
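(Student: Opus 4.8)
The plan is to induct on $k$, with the whole argument resting on a single instance of the Monk formula. Throughout, write $p_{[1,j]}$ for the class $p_{v_{[1,j]}}$ attached to the maximal consecutive string $[1,j] = \{1,2,\ldots,j\}$, so that $p_{[1,1]} = p_1$. The engine of the induction is the single-step relation
\[
p_1 \cdot p_{[1,j]} = j\,t\, p_{[1,j]} + p_{[1,j+1]}, \qquad 1 \leq j \leq n-2,
\]
which I would obtain by specializing Theorem~\ref{theorem:Monk} to $i=1$ and $\mathcal{A} = [1,j]$. For the off-diagonal sum, any $\mathcal{B} = [1,j] \cup \{k\}$ with $k \geq j+2$ has $1$ lying outside the maximal consecutive substring $[\toe{\mathcal{B}}{k}, \head{\mathcal{B}}{k}]$ of $\mathcal{B}$ that contains $k$, so $c^{\mathcal{B}}_{1,[1,j]} = 0$ by the vanishing clauses of Theorem~\ref{theorem:Monk}; the only surviving summand is $\mathcal{B} = [1,j+1]$. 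For this $\mathcal{B}$ one has $\head{\mathcal{B}}{j+1} = j+1$ and $\toe{\mathcal{B}}{j+1} = 1$, and since $1 = \toe{\mathcal{B}}{j+1} \leq i = 1 \leq k-1 = j$, formula~\eqref{eq:cBiA-formula-part2} gives $c^{[1,j+1]}_{1,[1,j]} = 1 \cdot \binom{j+1}{j+1} = 1$.

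It remains to pin down the diagonal coefficient $p_1(w_{[1,j]})$. Here $w_{[1,j]}$ is the longest element of the symmetric group on $\{1,\ldots,j+1\}$ (the order-reversing permutation of the first $j+1$ letters; cf.\ Example~\ref{example:Peterson Variety}), so in particular $w_{[1,j]}(1) = j+1$. Writing $t_1,\ldots,t_n$ for the standard $T^n$-weights, the localization of the Schubert divisor $\sigma_{s_1}$ at a fixed point $w$ is $t_1 - t_{w(1)}$ (standard; see also \cite{HarTym09}); restricting to $S^1$ via~\eqref{eq:def-circle}, where the $a$-th coordinate carries weight $n+1-a$ and hence $t_a \mapsto (n+1-a)\,t$, gives $p_1(w) = (w(1)-1)\,t$. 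Thus $p_1(w_{[1,j]}) = ((j+1)-1)\,t = j\,t$, consistent with the value $p_1(w_{\{1,2\}}) = 2t$ visible in the Monk relation $m_{1,\{1,2\}}$ computed in the worked example of Section~\ref{subsec:simplify}. Assembling the two computations gives the single-step relation above.

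With that relation in hand, the induction is routine. The base case $k=1$ is $p_1 = S(1,1)\,t^{0}\,p_{[1,1]} = p_1$. Assuming~\eqref{eq:stirling} at level $k$ (with $k \leq n-2$), I would compute
\[
p_1^{\,k+1} = p_1 \sum_{j=1}^{k} S(k,j)\, t^{k-j}\, p_{[1,j]} = \sum_{j=1}^{k} S(k,j)\, t^{k-j}\left( j\,t\, p_{[1,j]} + p_{[1,j+1]}\right),
\]
where every use of the single-step relation is legitimate because $j \leq k \leq n-2$ keeps $j+1 \leq n-1$ in range. Reindexing the second sum by $j \mapsto j-1$ and collecting the coefficient of $t^{(k+1)-j}\,p_{[1,j]}$ produces $j\,S(k,j) + S(k,j-1)$, which equals $S(k+1,j)$ by the defining recurrence of the Stirling numbers of the second kind (the conventions $S(k,0)=0$ and $S(k,k+1)=0$ account for the extreme terms $j=1$ and $j=k+1$). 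This is exactly~\eqref{eq:stirling} at level $k+1$.

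The main obstacle is the diagonal restriction value $p_1(w_{[1,j]}) = j\,t$: it is the only ingredient not produced by the off-diagonal Monk combinatorics, and it is precisely what supplies the $j\,S(k,j)$ branch of the Stirling recurrence, so getting this coefficient right is what makes the Stirling numbers appear. Everything else (the vanishing of the remaining off-diagonal terms, the single coefficient $c^{[1,j+1]}_{1,[1,j]} = 1$, and the recognition of the recurrence) is bookkeeping. One should also note that the induction terminates correctly at $k = n-1$: its final step multiplies $p_1$ only against classes $p_{[1,j]}$ with $j \leq n-2$, so the single-step relation is never asked to produce the out-of-range class $p_{[1,n]}$.
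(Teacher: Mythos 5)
Your proof is correct and follows essentially the same route as the paper's: induction on $k$ driven by the single Monk-formula step $p_1\cdot p_{v_{[1,j]}} = jt\,p_{v_{[1,j]}} + p_{v_{[1,j+1]}}$ and the Stirling recurrence $S(k+1,j)=jS(k,j)+S(k,j-1)$. The only differences are cosmetic: you re-derive the diagonal value $p_1(w_{[1,j]})=jt$ from localization where the paper simply cites \cite[Lemma 6.4]{HarTym09}, and you handle the boundary terms via the conventions $S(k,0)=S(k,k+1)=0$ rather than splitting them off explicitly.
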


\begin{proof}
We proceed by induction on $k$. Consider the base case
$k=1$. Then~\eqref{eq:stirling} becomes the equality
\[
p_{1}=S(1,1)p_{1}. 
\]
Here $S(1,1)$ is the number of ways to put $1$ ball into $1$ box, so
$S(1,1)=1$ and the claim follows. 

Now assume that~\eqref{eq:stirling} holds for $k$. We need to show
that it also holds for $k+1$, i.e., 
\[
p_{1}^{k+1}=\sum_{j=1}^{k+1} S(k+1,j) t^{k+1-j} \, p_{v_{[1,j]}}.
\]
By the inductive hypothesis 
this is equivalent to showing
that 
\begin{equation}\label{eq:inductive step}
\sum_{i=1}^{k}S(k,i)t^{k-i}p_{1}p_{v_{[1, i]}}=\sum_{j=1}^{k+1}S(k+1,j)t^{k+1-j}p_{v_{[1,j]}}.
\end{equation}
We now expand the left hand side using the Monk formula. For each $i$
it can be computed that 
\[
p_1 \, p_{v_{[1,i]}} = it \, p_{v_{[1,i]}} + p_{v_{[1,i+1]}}
\]
where we have used \cite[Lemma 6.4]{HarTym09} to compute
$p_1(w_{[1,i]})$. 
Therefore 
\begin{equation*}
 \begin{split}
    \sum_{i=1}^{k}S(k,i)t^{k-i}p_{1}p_{v_{[1, i]}} &= 
\sum_{i=1}^{k}S(k,i)t^{k-i} (it \, p_{v_{[1,i]}} + p_{v_{[1,i+1]}}) \\
& = \sum_{i=1}^k i \, S(k,i) t^{k+1-i} p_{v_{[1,i]}} + \sum_{i=1}^k
S(k,i) t^{k-i} p_{v_{[1,i+1]}} \\
&= S(k,1) t^k p_1 + \sum_{i=2}^k i \, S(k,i) t^{k+1-i} 
 p_{v_{[1,i]}} + \sum_{i=1}^{k-1} S(k, i) t^{k-i} p_{v_{[1,i+1]}} + S(k,k)
 p_{v_{[1,k+1]}} \\ 
 & = S(k,1) t^k p_1 + \sum_{i=2}^k i \, S(k,i) t^{k+1-i} 
 p_{v_{[1,i]}} + \sum_{i=2}^k S(k, i-1) t^{k+1-i} p_{v_{[1,i]}} + S(k,k)
 p_{v_{[1,k+1]}} \\
 & = S(k+1,1) t^k p_1 + \sum_{i=2}^k (i \, S(k,i) + S(k,i-1))
 t^{k+1-i} p_{v_{[1,i]}}  + S(k+1,k+1) p_{v_{[1,k+1]}} \\
 & = S(k+1,1) t^k p_1 + \sum_{i=2}^k S(k+1,j)
 t^{k+1-i} p_{v_{[1,i]}}  + S(k+1,k+1) p_{v_{[1,k+1]}} \\
 & = \sum_{j=1}^{k+1} S(k+1,j) t^{k+1-j} p_{v_{[1,j]}} 
  \end{split}
\end{equation*}
where we have used 
the recurrence relation for Stirling numbers (see e.g. \cite{Knu73})
\[
S(k+1, j) = j S(k, j) + S(k, j-1)
\]
and the fact that $S(k,1)=S(k,k)=S(k+1,1)=S(k+1,k+1)=1$ for any
$k$. The result follows. 

\end{proof}

\section{Stability of Peterson Schubert classes}\label{sec:stability}

We now observe that the Peterson Schubert classes
$\{p_{\mathcal{A}}\}$ for the Peterson varieties satisfy a
stability property for varying $n$, similar to that satisfied by the
classical equivariant Schubert classes.  This is an observation we
learned from H. Naruse; we do not claim originality. For this section only, for
a fixed positive integer $n$ we denote by $Y_n$ the Peterson variety
in $\Flags(\C^n)$.

Let $X_{w,n} \subseteq \Flags(\C^n)$ denote the \textbf{Schubert
  variety} corresponding to $w \in S_n$ in $\Flags(\C^n)$. By the
standard inclusion of groups $S_n \into S_{n+1}$, we may also consider $w$ to be an
element in $S_{n+1}$. Furthermore there is a natural
$T^n$-equivariant inclusion
$\iota_n: \Flags(\C^n) \into \Flags(\C^{n+1})$ induced by the inclusion of the
coordinate subspace $\C^n$ into $\C^{n+1}$. Then with respect to
$\iota_n$ the Schubert variety $X_{w,n}$ maps isomorphically onto the
corresponding Schubert variety $X_{w,n+1}$. 
Since the equivariant Schubert classes are
cohomology classes corresponding to the Schubert varieties, this
implies that for any $w \in S_n$ there exists an infinite sequence of Schubert
classes $\{\sigma_{w,m}\}_{m=n}^{\infty}$ which lift the classes $\sigma_{w,n} \in H^*_{T^n}(\Flags(\C^n))$, i.e.,
\begin{equation}\label{eq:maps of flags}
\xymatrix{
\cdots \ar[r] & H^*_{T^n}(\Flags(\C^{n+2})) \ar[r] & 
H^*_{T^n}(\Flags(\C^{n+1})) \ar[r] & H^*_{T^n}(\Flags(\C^n)) \\
\cdots \ar@{|->}[r] & \sigma_{w,n+2} \ar@{|->}[r] & \sigma_{w, n+1}
\ar@{|->}[r] & \sigma_{w,n}
}
\end{equation}
and furthermore for any $v \in S_n$ and any $m \geq n$, the restriction
$\sigma_{w,m}(v)$ is equal to $\sigma_{w,n}(v)$. 
The theorem below asserts that a similar statement holds for
Peterson Schubert classes. Observe that the inclusion $\iota_n: \Flags(\C^n)
\into \Flags(\C^{n+1})$ mentioned above also induces a natural inclusion
$j_n: Y_n \into Y_{n+1}$ since the principal nilpotent operator on
$\C^{n+1}$ preserves the coordinate subspace $\C^n$. Moreover, since
the central circle subgroup of $U(n,\C)$ acts trivially on
$\Flags(\C^n)$ for any $n$, the inclusion $j_n$ is equivariant with
respect to the $S^1$-actions on $Y_n$ and $Y_{n+1}$ given by the two
circle subgroups defined
by~\eqref{eq:def-circle} in $U(n,\C)$ and $U(n+1,\C)$
respectively. Thus there is a pullback homomorphism $j_n^*:
H^*_{S^1}(Y_{n+1}) \to H^*_{S^1}(Y_n)$ analogous to the map \(\iota_n:
H^*_{T^n}(\Flags(\C^{n+1})) \to H^*_{T^n}(\Flags(\C^n))\) above. We
have the following.

\begin{theorem}
For a positive integer $n$ let $Y_n$ denote the Peterson variety in
$\Flags(\C^n)$ equipped with
the natural $S^1$-action defined by~\eqref{eq:def-circle}. For $w \in
S_n$ let
$p_{w,n} \in H^*_{S^1}(Y_n)$ denote the Peterson Schubert class corresponding to $w$. Then the
natural inclusions $j_m: Y_m \into Y_{m+1}$ for $m \geq n$ induce a
sequence of homomorphisms $j_m^*: H^*_{S^1}(Y_{m+1}) \to H^*_{S^1}(Y_m)$ such
that $j_m^*(p_{w,m+1}) = p_{w,m}$, i.e., there exists a infinite
sequence of Peterson Schubert
classes $\{p_{w,m}\}_{m=n}^{\infty}$ which lift $p_{w,n} \in
H^*_{T^n}(\Flags(\C^n))$
\begin{equation}
\xymatrix{
\cdots \ar[r] & H^*_{S^1}(Y_{n+2}) \ar[r] & H^*_{S^1}(Y_{n+1}) \ar[r] 
 & H^*_{S^1}(Y_n) \\
\cdots \ar@{|->}[r] & p_{w,n+2} \ar@{|->}[r] & p_{w, n+1}
\ar@{|->}[r] & p_{w,n}
}
\end{equation}
and furthermore for any $v \in Y_n^{S^1}$ and any $m \geq n$, the
restriction $p_{w,m}(v)$ is equal to $p_{w,n}(v)$. 
\end{theorem}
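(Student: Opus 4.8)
The plan is to exploit the stability of the classical equivariant Schubert classes $\{\sigma_{w,m}\}$ (already recalled in the excerpt) together with the naturality of the various forgetful and restriction maps, and then transport this to the Peterson setting via the commutative diagram relating $H^*_{T^m}(\Flags(\C^m))$ and $H^*_{S^1}(Y_m)$. Concretely, for each $m \geq n$ the Peterson Schubert class $p_{w,m}$ is by definition the image of $\sigma_{w,m}$ under the projection $H^*_{T^m}(\Flags(\C^m)) \to H^*_{S^1}(Y_m)$, so the statement $j_m^*(p_{w,m+1}) = p_{w,m}$ should follow if the map $j_m^*$ is compatible with $\iota_m^*$ and the two projection maps. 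I would therefore begin by assembling a single commutative square
\[
\xymatrix{
H^*_{T^{m+1}}(\Flags(\C^{m+1})) \ar[r] \ar[d]_{\iota_m^*} & H^*_{S^1}(Y_{m+1}) \ar[d]^{j_m^*} \\
H^*_{T^{m}}(\Flags(\C^{m})) \ar[r] & H^*_{S^1}(Y_{m})
}
\]
where the horizontal maps are the projections defining the Peterson Schubert classes and the left vertical map is the composite of the forgetful map to $S^1$-equivariant cohomology with the restriction along $\iota_m$.

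The key steps, in order, would be: first, verify that the square above commutes. This is essentially a statement about functoriality of equivariant cohomology applied to the commutative square of spaces and group inclusions, namely $j_m: Y_m \into Y_{m+1}$ sitting inside $\iota_m: \Flags(\C^m) \into \Flags(\C^{m+1})$, together with the inclusion of circle subgroups into the respective maximal tori. One must check that the $S^1 \subseteq T^m$ compatible with $S^1 \subseteq T^{m+1}$ under the inclusion induced by $j_m$ — here the remark in the excerpt that the central circle of $U(m,\C)$ acts trivially is exactly what guarantees the two circle subgroups defined by~\eqref{eq:def-circle} are compatible. Second, chase $\sigma_{w,m+1}$ around the square: going right then down gives $j_m^*(p_{w,m+1})$, while going down then right gives the projection of $\iota_m^*(\sigma_{w,m+1})$. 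By the classical stability recalled in~\eqref{eq:maps of flags}, $\iota_m^*(\sigma_{w,m+1}) = \sigma_{w,m}$, whose projection is $p_{w,m}$ by definition; commutativity then yields $j_m^*(p_{w,m+1}) = p_{w,m}$.

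Third, I would deduce the fixed-point restriction statement. Since $Y_m^{S^1} \subseteq Y_{m+1}^{S^1}$ under $j_m$ (a Peterson fixed point of $\C^m$ remains one in $\C^{m+1}$, compatibly with the one-line-notation description in~\eqref{eq:w-oneline}), restricting the equality $j_m^*(p_{w,m+1}) = p_{w,m}$ to a fixed point $v \in Y_m^{S^1}$ gives $p_{w,m+1}(v) = p_{w,m}(v)$, and the corresponding classical restriction equality $\sigma_{w,m+1}(v) = \sigma_{w,n}(v)$ propagates this down to $p_{w,m}(v) = p_{w,n}(v)$ by induction on $m$. One should confirm that the localization maps $H^*_{S^1}(Y_m) \to H^*_{S^1}(Y_m^{S^1})$ are compatible with the inclusions, which again reduces to functoriality.

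The main obstacle I expect is step one: establishing that the square genuinely commutes requires care about which circle acts and how the identifications of fixed points and tori interact across the dimension shift. In particular one must be sure that $\iota_m^*$ of an equivariant class, after forgetting to $S^1$, really agrees with the class obtained by first forgetting to $S^1$ over $\Flags(\C^{m+1})$ and then restricting along $j_m$ — i.e. that forgetting coefficients and restricting to subspaces commute, which is standard but deserves an explicit remark given the nonstandard circle subgroup. Once the diagram is known to commute, the rest is a formal diagram chase combined with the already-recalled classical stability, so I would devote the bulk of the written proof to justifying commutativity and treat the chase as routine.
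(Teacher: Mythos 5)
Your proposal is correct and follows essentially the same route as the paper: both arguments reduce the statement to the commutativity of the diagram relating the classical restriction maps $\iota_m^*$ on equivariant cohomology of flag varieties to the maps $j_m^*$ on $H^*_{S^1}(Y_m)$ via the projections defining the Peterson Schubert classes, and then invoke the classical stability of the $\sigma_{w,m}$. The paper treats the commutativity as immediate from naturality, whereas you flag it as the step deserving the most care, but the substance of the argument is the same.
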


\begin{proof} 
By naturality and the definition of Peterson Schubert classes $p_{w,n}
\in H^*_{S^1}(Y_n)$ as the images of $\sigma_{w,n}$, it is immediate
that~\eqref{eq:maps of flags} can be expanded to a commutative diagram 
\begin{equation}
\xymatrix{
\cdots \ar[r] & H^*_{T^n}(\Flags(\C^{n+2})) \ar[r]^{\iota_{n+1}^*} \ar[d] & 
H^*_{T^n}(\Flags(\C^{n+1})) \ar[r]^{\iota_n^*} \ar[d] & H^*_{T^n}(\Flags(\C^n))
\ar[d] \\
\cdots \ar[r] & H^*_{S^1}(Y_{n+2}) \ar[r]_{j_{n+1}^*} & H^*_{S^1}(Y_{n+1}) \ar[r]_{j_n^*} 
 & H^*_{S^1}(Y_n)
}
\end{equation}
where the vertical arrows are the projection maps
$H^*_{T^n}(\Flags(\C^m)) \to H^*_{S^1}(Y_n)$ obtained by the
composition of $H^*_{T^n}(\Flags(\C^m)) \to H^*_{S^1}(\Flags(\C^m))$
with $H^*_{S^1}(\Flags(\C^m)) \to H^*_{S^1}(Y_n)$. In particular, for
any $w \in S_n$ and $m \geq n$, the 
vertical maps send $\sigma_{w,m}$ to $p_{w,m}$. The result follows.
\end{proof} 

\def\cprime{$'$}


\begin{thebibliography}{10}

\bibitem{BriCar04}
M.~Brion and J.~B. Carrell.
\newblock The equivariant cohomology ring of regular varieties.
\newblock {\em Michigan Math. J.}, 52(1):189--203, 2004.

\bibitem{CarKav08}
J.~B. Carrell and K.~Kaveh.
\newblock On the equivariant cohomology of subvarieties of a {$B$}-regular
  variety.
\newblock {\em Transform. Groups}, 13(3-4):495--505, 2008.

\bibitem{DeMProSha92}
F.~De~Mari, C.~Procesi, and M.~A. Shayman.
\newblock Hessenberg varieties.
\newblock {\em Trans. Amer. Math. Soc.}, 332(2):529--534, 1992.

\bibitem{Ful99}
J.~Fulman.
\newblock Descent identities, {H}essenberg varieties, and the {W}eil
  conjectures.
\newblock {\em J. Combin. Theory Ser. A}, 87(2):390--397, 1999.

\bibitem{Ful97}
W.~Fulton.
\newblock {\em Young tableaux}, volume~35 of {\em London Mathematical Society
  Student Texts}.
\newblock Cambridge University Press, Cambridge, 1997.
\newblock With applications to representation theory and geometry.

\bibitem{Fun03}
F.~Y.~C. Fung.
\newblock On the topology of components of some {S}pringer fibers and their
  relation to {K}azhdan-{L}usztig theory.
\newblock {\em Adv. Math.}, 178(2):244--276, 2003.

\bibitem{HarTym09}
M.~Harada and J.~Tymoczko.
\newblock A positive {M}onk formula in the {$S^1$}-equivariant cohomology of
  type {A} {P}eterson varieties, arXiv:0908.3517.
\newblock To be published in {\em Proc. London Math. Soc.}

\bibitem{HarTym10}
M.~Harada and J.~Tymoczko.
\newblock Poset pinball, {GKM}-compatible subspaces, and {H}essenberg
  varieties, arXiv:1007.2750.

\bibitem{Knu73}
D.~E. Knuth.
\newblock {\em The art of computer programming}.
\newblock Second edition, 1973.
\newblock Volume 1: Fundamental algorithms, Addison-Wesley Series in Computer
  Science and Information Processing.

\bibitem{Kos96}
B.~Kostant.
\newblock Flag manifold quantum cohomology, the {T}oda lattice, and the
  representation with highest weight {$\rho$}.
\newblock {\em Selecta Math. (N.S.)}, 2(1):43--91, 1996.

\bibitem{Rie03}
K.~Rietsch.
\newblock Totally positive {T}oeplitz matrices and quantum cohomology of
  partial flag varieties.
\newblock {\em J. Amer. Math. Soc.}, 16(2):363--392 (electronic), 2003.

\bibitem{Shi85}
N.~Shimomura.
\newblock The fixed point subvarieties of unipotent transformations on the flag
  varieties.
\newblock {\em J. Math. Soc. Japan}, 37(3):537--556, 1985.

\bibitem{SommersTymoczko}
E.~Sommers and J.~Tymoczko. 
\newblock Exponents for {$B$}-stable ideals. 
\newblock {\em Trans. Amer. Math. Soc.}, 358(8):3493--3509
(electronic), 2006. 

\bibitem{Spa76}
N.~Spaltenstein.
\newblock The fixed point set of a unipotent transformation on the flag
  manifold.
\newblock {\em Nederl. Akad. Wetensch. Proc. Ser. A {\bf 79}=Indag. Math.},
  38(5):452--456, 1976.

\bibitem{Tym06}
J.~S. Tymoczko.
\newblock Linear conditions imposed on flag varieties.
\newblock {\em Amer. J. Math.}, 128(6):1587--1604, 2006.

\end{thebibliography}
\end{document}